\newtheorem{theorem}{Theorem}
\newtheorem{lemma}[theorem]{Lemma}
\newtheorem{proposition}[theorem]{Proposition}
\newtheorem{corollary}[theorem]{Corollary}
\newtheorem{definition}[theorem]{Definition}
\theoremstyle{definition}
\newcommand{\C}{\ensuremath{\mathbb{C}}}%
\newcommand{\Z}{\mathbb{Z}}
\newcommand{\alb}{\mathsf{X}}
\newcommand{\arr}{\longrightarrow}
\begin{document}

\title{On topological full groups of $\mathbb{Z}^d$-actions}

\author{M. Chornyi, K. Juschenko, V. Nekrashevych}%


\begin{abstract}
We give new examples of simple finitely generated groups arising from actions of free abelian groups on the Cantor sets. As particular examples, we discuss groups of interval exchange transformations, and a group naturally associated with the Penrose tilings. Many groups in this class are amenable.
\end{abstract}

\maketitle

\section{Introduction}
The motivation of this paper is to enrich the class of {\it non-elementary amenable} groups.
A group $G$ is amenable if there exists a finitely additive translation invariant probability measure on all subsets of $G$. This definition was given by John von Neumann, \cite{von-neumann1}, in a response to Banach-Tarski, and Hausdorff paradoxes. He singled out the property of a group which forbids paradoxical actions.

The class of {\it elementary amenable groups}, denoted by  $EG$, was introduced by Mahlon Day in \cite{day:semigroups}, as the smallest class of groups that contain finite and abelian groups and is closed under taking subgroups, quotients, extensions and directed unions. The fact that the class of amenable groups is closed under these operations was already known to von Neumann, \cite{von-neumann1}, who noted at that at that time there was no known amenable group which did not belong to $EG$.

No substantial progress in understanding this class has been made until the~80s, when  Chou, \cite{C}, showed that all elementary amenable groups have either polynomial or exponential growth, and Rostislav Grigorchuk, \cite{grigorchuk:milnor_en} gave an example of a group with intermediate growth. Grigorchuk's group served as a starting point in developing the theory of groups with intermediate growth, all of them being non-elementary amenable. In the same paper Chou showed that every simple finitely generated infinite group is not elementary amenable. In \cite{JM} it was shown that the  topological full group of Cantor minimal system is amenable.  By the results of  Matui, \cite{Matui}, this group has a simple and finitely generated commutator subgroup, in particular, it is not elementary amenable. This was the first example of infinite simple finitely generated amenable group.

Currently there are only two sources of non-elementary amenable groups: groups acting on rooted trees and topological full groups of Cantor minimal systems. In \cite{j-trees}, the author gives a unified approach to non-elementary amenability of groups acting on rooted trees. Here we give more examples of non-elementary amenable groups coming from topological full groups of Cantor minimal systems.

\begin{theorem}
\label{main}
Consider a minimal faithful action of $\mathbb{Z}^d$ on a Cantor set conjugate to the action on a closed $\Z^d$-invariant subset of $\alb^{\Z^d}$ for some finite alphabet $\alb$. Then the commutator subgroup of the topological full group $[[\mathbb{Z}^d]]$ is finitely generated.
\end{theorem}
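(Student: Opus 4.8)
The plan is to produce an explicit finite generating set for the commutator subgroup and then verify that it generates. I work throughout with the subshift model, so that a clopen subset of the Cantor set $C \subseteq \alb^{\Z^d}$ is a finite union of cylinders, each cylinder being determined by prescribing the symbols of a configuration on a finite window $F \subset \Z^d$. Every $g \in [[\Z^d]]$ is given by a continuous, hence locally constant, cocycle $c_g \colon C \to \Z^d$ with $g(x) = c_g(x)\, x$, so $g$ is assembled from finitely many partial translations on cylinders. The basic building blocks are the \emph{swaps}: for a cylinder $U$ and $n \in \Z^d$ with $U \cap nU = \emptyset$, let $\tau_{U,n}$ be the involution translating $U$ onto $nU$ by $n$, translating $nU$ back by $-n$, and fixing the rest. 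I first record that $[[\Z^d]]$ is generated by the translations $e_1,\dots,e_d$ together with all swaps, and that the commutator subgroup is generated by the index-zero combinations of swaps --- disjoint double swaps $\tau_{U,n}\tau_{V,m}$ and $3$-cycles --- since the abelianization is detected by the standard index homomorphism into a homology group of the associated groupoid.

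Second, I reduce to \emph{unit swaps} $\tau_{U,e_i}$. Writing a general $n$ as a word $e_{i_1}\cdots e_{i_k}$ in the generators determines a lattice path $0,\, e_{i_1},\, e_{i_1}+e_{i_2},\, \dots,\, n$; by minimality the system is recurrent, so after shrinking $U$ to a sufficiently small cylinder the translates of $U$ along this path are pairwise disjoint. The standard factorization of a transposition into adjacent transpositions in the symmetric group then expresses $\tau_{U,n}$ as a product of unit swaps $\tau_{U_j,\,e_{i_{j+1}}}$, each supported on a translate $U_j$ of $U$, which is again a cylinder. Thus the commutator subgroup is generated by index-zero products of unit swaps on cylinders.

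The crux is the third step: showing that only finitely many unit swaps are needed, even though there are cylinders of every window size and $[[\Z^d]]$ itself is typically not finitely generated. The idea is a renormalization induction on the window size $N$. Fixing $N$, the box $[0,N)^d$ cylinders form a finite clopen partition $\{U_1,\dots,U_M\}$ of $C$; I show that a unit swap on an $(N{+}1)$-cylinder can be rewritten, using conjugation by the translations $e_i$ and composition, in terms of swaps on $N$-cylinders together with a bounded collection of interface moves coming from the finitely many legal ways an $N$-pattern extends to an $(N{+}1)$-pattern in the language of the subshift. Minimality supplies a \emph{uniform} recurrence radius, so the translations needed to realign translated cylinders are drawn from a finite list independent of $N$; this is what keeps the generating set finite across all scales, and assembling these yields a finite generating set built from swaps at a single fixed scale.

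The main obstacle is precisely this scale-reduction in dimension $d>1$. In the one-dimensional case the Kakutani--Rokhlin towers are linearly ordered stacks and the combinatorics collapse to symmetric groups on tower levels; in $\Z^d$ the towers are $d$-dimensional boxes, and moving a cylinder across a box requires controlling corner and boundary overlaps and verifying disjointness along genuinely multidimensional paths. I expect to handle this with a marker construction for $\Z^d$-subshifts, producing clean box tilings of uniformly bounded geometry, so that the number of local interface moves, and hence the number of generators, stays finite. Verifying that these finitely many swaps, together with the index bookkeeping, generate \emph{all} of the commutator subgroup, rather than merely a finite-index or dense subgroup, is the delicate endpoint of the argument.
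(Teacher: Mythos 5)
Your overall architecture --- explicit elementary generators at a single scale, a scale-reduction step showing that coarse-scale generators produce all finer ones, and a reduction of arbitrary elements of the derived subgroup to those elementary pieces --- matches the paper's. But there is a genuine gap exactly at the step you yourself flag as the ``crux'' and the ``main obstacle'': you never supply a mechanism for producing a swap (or $3$-cycle) supported on a finer cylinder from generators supported on coarser cylinders. Conjugating $\tau_{U,e_i}$ by a translation only translates its support, it does not refine $U$; and the ``bounded collection of interface moves'' is not a construction --- in dimension $d\ge 2$ the number of legal extensions of an $N$-pattern to an $(N{+}1)$-pattern grows with $N$ (the boundary annulus has on the order of $N^{d-1}$ cells), so this is precisely where finiteness must be earned, and a marker/tiling construction is only a hope here. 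The paper's engine for this step is an algebraic identity rather than a tower argument: for two $3$-cycles $a=T_{\pi_1,(0,g_1,g_2)}$ and $b=T_{\pi_2,(0,h_1,h_2)}$ whose six translated cylinders are pairwise disjoint except for the pair $\mathcal{W}_{\pi_1}$, $\mathcal{W}_{\pi_2}$, the double commutator $[[b^{-1},a^{-1}],[b,a]]$ equals $T_{\pi_1\cup\pi_2,(0,g_1,g_2)}$, i.e.\ the $3$-cycle supported on the \emph{intersection} of the two cylinders (Corollary~\ref{cor:patchesunion}). Taking $\pi_2$ to be a translate of $\pi_1$ by a unit vector enlarges the defining window by one step, and the uniform disjointness radius supplied by freeness plus compactness (Lemma~\ref{lem:incompatible}) makes the disjointness hypothesis verifiable at every scale. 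This is also why the paper works with $3$-cycles rather than involutions: the identity requires elements of order three. Without something playing this role, your induction on window size does not close.

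A secondary issue: you identify the commutator subgroup with the ``index-zero combinations of swaps'' via an index homomorphism into groupoid homology. The kernel of the index map need not coincide with the derived subgroup (the abelianization of $[[\Z^d]]$ is in general larger than the part detected by the index), so this characterization requires justification you do not give. The paper sidesteps it: by Matui's theorem the derived subgroup of $[[\Z^d]]$ is simple and contained in every nontrivial normal subgroup, while the set of all $3$-cycles of the full group is conjugation-invariant and consists of commutators, hence generates exactly $[[\Z^d]]'$. You would need either to import Matui's computation of the abelianization or to switch to this normality argument. Finally, to pass from $3$-cycles whose displacements lie in a single coordinate direction to arbitrary $3$-cycles, the paper needs a genuinely combinatorial statement about generating the alternating group of a $3^d$-element cube by axis-aligned $3$-cycles (Lemma~\ref{l1}); your lattice-path factorization of $\tau_{U,n}$ into unit swaps handles the analogous point inside the full group but does not by itself keep the bookkeeping inside the derived subgroup.
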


We describe an explicit generating set, and give a simple direct proof. Subsequent paper~\cite{nek:fullgr} gives a less direct proof of the finite generation of special subgroups $\mathsf{A}(G)$ of the topological full groups in the case of expansive action. The groups $\mathsf{A}(G)$ are closely related to the derived subgroups of the full groups (in particular, they coincide in the case of the actions of abelian groups), but precise relation is still not well understood. 

It was proved in \cite{matui2}, that the commutator subgroup of $[[\mathbb{Z}^d]]$ is simple. The topological full groups that correspond to interval exchange transformation group were studied in \cite{JMMS}. The authors prove that subgroups of {\it rank} equals to 2 are amenable. These groups can be realized as topological full groups of minimal action of $\mathbb{Z}^2$ on the Cantor set. Therefore, Theorem~\ref{main} in the combination with \cite{matui2}, \cite{JMMS} gives more examples of simple finitely generated infinite amenable groups, and thus by the result of Chou non-elementary amenable groups.  Matui,  \cite{matui2}, showed that $[[T_1, \ldots, T_m]] = [[T_1, \ldots, T_n]]$ implies $m=n$. In particular, this implies that the groups from the corollary are different from the one previously obtained in \cite{JM}.

In the last Section we associate a group $\mathcal{P}$ to the Penrose tiling. The main result is
\begin{theorem}
The derived subgroup of $\mathcal{P}$ is simple and finitely generated.
\end{theorem}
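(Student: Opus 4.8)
The plan is to realize $\mathcal{P}$ as the topological full group $[[\Z^2]]$ of a minimal, faithful $\Z^2$-action on a Cantor set satisfying the hypotheses of Theorem~\ref{main}, and then to read off both assertions from the results already available. Since the derived subgroup of $\mathcal{P}$ is by definition its commutator subgroup, once such a realization is in place the theorem is immediate: \cite{matui2} shows the commutator subgroup of $[[\Z^2]]$ is simple, and Theorem~\ref{main} shows it is finitely generated, so $\mathcal{P}' = [[\Z^2]]'$ is simple and finitely generated. Thus the entire mathematical content is to present the Penrose system in the form required by Theorem~\ref{main}, namely as a minimal faithful $\Z^2$-subshift.

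First I would fix the dynamical model. The continuous hull of the Penrose tiling carries a natural translation action of $\mathbb{R}^2$, which is not a Cantor-set action, so I would pass to a discrete combinatorial model: choose a control point in each prototile and a combinatorial coordinate frame on the control points so that the two canonical neighbour maps become commuting homeomorphisms generating a $\Z^2$-action on the transversal $\Xi$, which is a Cantor set by finite local complexity. Concretely, finite local complexity means that only finitely many patterns of a given radius occur up to translation, and recording the pattern seen around each control point over this finite set of patterns embeds the discrete hull $\Z^2$-equivariantly into $\alb^{\Z^2}$ for a finite alphabet $\alb$. This yields the required conjugacy to a closed $\Z^2$-invariant subset of $\alb^{\Z^2}$.

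Next I would verify the two dynamical hypotheses. Minimality follows from repetitivity of the Penrose tiling: every finite patch recurs within a bounded distance of every point (Penrose is in fact linearly repetitive), so every orbit is dense in the hull. Faithfulness follows from aperiodicity: the Penrose tiling has no nontrivial translational symmetry, so the $\Z^2$-action is free, and in particular faithful. With minimality, faithfulness, and the subshift presentation established, the system meets exactly the hypotheses of Theorem~\ref{main}, and the conclusion follows by combining that theorem (finite generation) with \cite{matui2} (simplicity).

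The step I expect to be the main obstacle is the passage from the $\mathbb{R}^2$-hull to an honest $\Z^2$-action on the Cantor transversal. The natural return-time structure of a cut-and-project set is a Delone set rather than a lattice, so one cannot simply translate by lattice vectors; instead one must choose the control points and the two commuting generators so that they act by genuine homeomorphisms of $\Xi$ (not merely partially defined maps on return sets), check that the resulting $\Z^2$-orbit structure remains minimal and free, and confirm that the finite-pattern coding intertwines the action. Once this combinatorial model is fixed and shown to be a minimal faithful $\Z^2$-subshift conjugate to a subset of $\alb^{\Z^2}$, every remaining ingredient is supplied by Theorem~\ref{main} and \cite{matui2}.
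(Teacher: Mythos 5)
Your overall strategy---realize $\mathcal{P}$ as the topological full group of a minimal faithful $\mathbb{Z}^2$-action and then quote Theorem~\ref{main} together with Matui's simplicity theorem---is exactly the strategy of the paper. The problem is that the one step you defer, the construction of the $\mathbb{Z}^2$-action, is the entire mathematical content of this section, and the route you sketch for it does not work. There are no ``two canonical neighbour maps'' on the Penrose transversal: the vertex set of a Penrose tiling is not a lattice, the return vectors between vertices generate the rank-four group $P=(1-\zeta)\mathbb{Z}[\zeta]\cong\mathbb{Z}^4$ (where $\zeta=e^{2\pi i/5}$), and the local translations $F_{A,v_1,v_2}$ are only partially defined on the transversal. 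So the standard tiling-theory package you invoke (discrete hull, finite local complexity, repetitivity, aperiodicity) produces an \'etale groupoid on the transversal, not a group action; choosing control points and two commuting, globally defined homeomorphisms whose germs generate that groupoid is precisely the nontrivial point, and it cannot be obtained from a generic coding argument. Likewise, minimality and freeness would have to be checked for whatever $\mathbb{Z}^2$-action one actually constructs, not for the $\mathbb{R}^2$-hull.

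The paper's resolution is specific to Penrose tilings and goes through de Bruijn's algebraic coordinates. The transversal $\mathcal{T}$ is identified with a union $V'$ of four pentagons sitting inside a ``doubled plane'' $\mathcal{C}$ on which $P$ acts by translations, and $\mathcal{P}$ becomes the group of transformations of $V'$ that are locally translations by elements of $P$. One then proves by an explicit scissors congruence---cutting the pentagons into triangles and reassembling them using only $P$-translations---that $V'$ is equidecomposable with a fundamental parallelogram of a rank-two subgroup $\langle w_1, w_2\rangle<P$. This identifies $\mathcal{P}$ with the topological full group of the action of $P/\langle w_1, w_2\rangle\cong\mathbb{Z}^2\oplus\mathbb{Z}/5\mathbb{Z}$ on $\mathcal{C}/\langle w_1, w_2\rangle$, to which the results of Section~2 apply. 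Without this equidecomposition (or some substitute for it), your argument reduces the theorem to the unproved claim that a suitable $\mathbb{Z}^2$-model exists, which is the very thing that needed to be established.
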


It is an open question to decide if the group $\mathcal{P}$ is amenable.

\section{A finite generating set of the derived subgroup  of $[[\Z^d]]$}

\begin{lemma}
\label{lem:free}
Let $G$ be an abelian group. If the action of $G$ on a Cantor set $\mathbf{C}$ is minimal and faithful, then it is free.
\end{lemma}

\begin{proof}
Suppose that $g\in G$ is a non-zero element. By faithfulness of the action, there exists $x\in\mathbf{C}$ such that $g(x)\ne x$. Then there exists a neighborhood $U$ of $x$ such that $g(U)\cap U=\emptyset$. Let $y\in\mathbf{C}$ be an arbitrary point. By minimality, there exists $h\in G$ such that $h(y)\in U$. Since $U$ and $g(U)$ are disjoint, the points $h(y)$ and $gh(y)$ are different.
Then $g(y)=h^{-1}gh(y)\ne h^{-1}h(y)=y$. It follows that $g$ has no fixed points in $\mathbf{C}$.
\end{proof}

Let us fix a minimal action of the free abelian group $\Z^d$ on a closed shift-invariant subset $\mathbf{C}\subset\alb^{\Z^d}$ of the full shift over a finite alphabet $\alb$. Then $\mathbf{C}$ is homeomorphic to the Cantor set. We use the additive notation for the group $\Z^d$. If $w:\Z^d\arr\alb$ is a point of $\alb^{\Z^d}$, then its image under the action of $g\in\Z^d$ is defined by the rule
\[g(w)(h)=w(h-g).\]
In other words, elements of $\alb^{\Z^d}$ are labelings of the points of $\Z^d$ by elements of $\alb$, and the elements $g\in\Z^d$ act by shifting all the labels by $g$. Alternatively, we may imagine the action of $g$ as the shift of the ``origin of coordinates'' in a given sequence $w$ by $-g$.

A \emph{patch} $\pi=(f, P)$ is a finite subset $P\subset\Z^d$ together with a map $f:P\arr\alb$. The set $P$ is called the \emph{support} of the patch. We say that an element $w\in\alb^{\Z^d}$ (a \emph{$\Z^d$-sequence})  \emph{contains} the patch $(f, P)$ if $w|_P=f$. The set $\mathcal{W}_\pi$ of all sequences containing a given patch $\pi$ is a clopen subset of $\alb^{\Z^d}$ called the \emph{cylindrical set} defined by the patch, and the set of all such clopen subsets forms a basis of topology on $\alb^{\Z^d}$, by definition.

We say that two patches $\pi_1=(f_1, P_1)$ and $\pi_2=(f_2, P_2)$ are \emph{compatible} if there exists sequence $w\in\mathbf{C}$ containing $\pi_1$ and $\pi_2$. In other words, the patches are compatible if the intersection of the associated cylindrical sets is non-empty. If the patches $\pi_1$ and $\pi_2$ are compatible then there \emph{union} $\pi_1\cup\pi_2$ is the patch $(f, P_1\cup P_2)$ where $f|_{P_1}=f_1$ and $f|_{P_2}=f_2$.
Note that in terms of the cylindrical sets, we have $\mathcal{W}_{\pi_1\cup\pi_2}=\mathcal{W}_{\pi_1}\cap\mathcal{W}_{\pi_2}$.

If $\pi=(f, P)$ is a patch, and $Q$ is a finite set containing $P$, then $\mathcal{W}_\pi$ is equal to the disjoint union of the sets $\mathcal{W}_{(\tilde f, Q)}$, where $\tilde f$ runs through the set of all maps $\tilde f:Q\arr\alb$ such that $\tilde f|_P=f$. Note that some of these sets may be empty (if the corresponding patch is not allowed for the elements of $\mathbf{C}$). 

If $\pi=(f, P)$ is a patch and $g\in\Z^d$, then we have $g(\mathcal{W}_{\pi})=\mathcal{W}_{\pi+g}$, where $\pi+g=(f+g, P+g)$, where $(f+g):(P+g)\arr\alb$ is given by $(f+g)(h)=f(h-g)$, in accordance with the definition of the action of $\Z^d$ on $\alb^{\Z^d}$.

\begin{lemma} 
\label{lem:incompatible}
Let $A\subset\Z^d$ be a finite set not containing zero. Then there exists $B\subset\Z^d$ such that for every $w\in \mathbf{C}$ and every $g\in A$ the patches $(w|_B, B)$ and $((w+g)|_B, B)$ are not compatible.
\end{lemma}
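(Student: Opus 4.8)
The plan is to first translate the incompatibility condition into a statement about a single sequence and a finite window, and then extract a uniform window by a compactness argument.

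\emph{Reformulation.} Fix $w\in\mathbf{C}$ and $g\in A$ and look at the two patches $(w|_B,B)$ and $((w+g)|_B,B)$. Both are realized by sequences of $\mathbf{C}$: the first by $w$, the second by $w+g$, which lies in $\mathbf{C}$ because $\mathbf{C}$ is shift-invariant. Since the two patches share the same support $B$, any sequence containing both must restrict on $B$ simultaneously to $w|_B$ and to $(w+g)|_B$; hence they are compatible if and only if $w|_B=(w+g)|_B$. Writing this out coordinatewise via $(w+g)(h)=w(h-g)$, the patches are \emph{incompatible} exactly when there is some $h\in B$ with $w(h)\ne w(h-g)$. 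Thus it is enough to produce a finite $B$ such that for every $w\in\mathbf{C}$ and every $g\in A$ there exists $h\in B$ with $w(h)\ne w(h-g)$.

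\emph{Using freeness.} Fix $g\in A$. As $0\notin A$ and the action is minimal and faithful, Lemma~\ref{lem:free} shows it is free, so $w+g\ne w$ for every $w\in\mathbf{C}$. Consequently, for each individual $w$ there is at least one index $h\in\Z^d$ with $w(h)\ne w(h-g)$.

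\emph{Compactness.} For fixed $g$ and $h$, the set $U_{g,h}=\{w\in\mathbf{C}:w(h)\ne w(h-g)\}$ depends only on the two coordinates $w(h)$ and $w(h-g)$, so it is a finite union of cylindrical sets (indexed by the pairs of distinct letters placed at $h$ and $h-g$) and hence clopen. By the previous step $\bigcup_{h\in\Z^d}U_{g,h}=\mathbf{C}$, so compactness of $\mathbf{C}$ yields a finite $B_g\subset\Z^d$ with $\mathbf{C}=\bigcup_{h\in B_g}U_{g,h}$. Taking $B=\bigcup_{g\in A}B_g$, which is finite because $A$ is finite, then settles the claim for all $g\in A$ at once.

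The only genuine subtlety is obtaining a \emph{single} finite window $B$ valid for all $w$ simultaneously: freeness supplies, for each fixed $w$, a distinguishing coordinate, but a priori that coordinate could run off to infinity as $w$ varies over $\mathbf{C}$. The compactness of $\mathbf{C}$ together with the clopenness of the sets $U_{g,h}$ is precisely what prevents this, and finiteness of $A$ lets us combine the finitely many windows $B_g$ into one.
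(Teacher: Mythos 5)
Your proof is correct and rests on the same two ingredients as the paper's: freeness of the action (Lemma~\ref{lem:free}) to guarantee that each $w$ and $g(w)$ differ somewhere, and compactness of $\mathbf{C}$ to make the distinguishing coordinate range over a single finite window. The only difference is presentational — you invoke compactness directly via a finite subcover of the clopen sets $U_{g,h}$, whereas the paper argues by contradiction with a metric and a convergent sequence $w_n$ forcing a fixed point.
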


\begin{proof}
Define the following metric on $\alb^{\Z^d}$. The distance $|w_1-w_2|$ is equal to $2^{-R}$, where $R$ is the biggest number such that restrictions of $w_1$ and $w_2$ to the ball of radius $R$ in $\Z^d$ with center in $0$ (for example in the $\ell_\infty$ norm) coincide. The it is enough to prove that there exists $\epsilon$ such that $|g(w)-w|>\epsilon$ for all $g\in A$ and $w\in\mathbf{C}$. Namely, for every $\epsilon$ there exists a finite set $B\subset\Z^d$ such that for every $w\in\alb^{\Z^d}$ the set of all $u\in\alb^{\Z^d}$ such that $(u|_B, B)=(w|_B, B)$ is contained in the $\epsilon$-neighborhood of $w$.

Suppose that it is not true, i.e., that for every $\epsilon>0$ there exist $w$ and $g\in A$ such that $|g(w)-w|\le \epsilon$. Since $A$ is finite, this implies that there exists $g\in A$ and a sequence of points $w_n\in\mathsf{C}$ such that $|g(w_n)-w_n|\to 0$ as $n\to \infty$. Since $\mathbf{C}$ is homeomorphic to the Cantor set, this implies that $g$ has a fixed point, which is a contradiction Lemma~\ref{lem:free}.
\end{proof}

Let $U\subset\mathbf{C}$ be a clopen set, and $g_1, g_2, g_3\in\Z^d$ elements such that $g_1(U), g_2(U), g_3(U)$ are pairwise disjoint. Denote by $T_{U, (g_1, g_2, g_3)}$ the element of $[[\Z^d]]$ given by
\[T_{U, (g_1, g_2, g_3)}(w)=\left\{\begin{array}{rl} (g_2-g_1)(w) & \text{if $w\in g_1(U)$;}\\
(g_3-g_2)(w) & \text{if $w\in g_2(U)$;}\\
(g_1-g_3)(w) & \text{if $w\in g_3(U)$;}\\
w & \text{if $w\notin g_1(U)\cup g_2(U)\cup g_3(U)$.}\end{array}\right.\]
In other words, $T_{U, (g_1, g_2, g_3)}$ cyclically permutes $g_1(U)$, $g_2(U)$, and $g_3(U)$ in the natural way. 
We will denote $T_{\pi, (g_1, g_2, g_3)}=T_{\mathcal{W}_\pi, (g_1, g_2, g_3)}$, for a patch $\pi$. 

\begin{lemma}
Let $A_1, A_2, A_3, B_1, B_2, B_3$ be subsets of a set $X$ such that only $A_1$ and $B_1$ have non-empty intersection, while all the other pairs of subsets are disjoint. Let $a$ be a permutation of $X$ acting trivially on $X\setminus (A_1\cup A_2\cup A_3)$, and satisfying $a(A_1)=A_2$, $a(A_2)=A_3$, $a(A_3)=A_1$, and $a^3=1$. Similarly, let $b$ be a permutation acting trivially on $X\setminus (B_1\cup B_2\cup B_3)$ and satisfying $b(B_1)=B_2$, $b(B_2)=B_3$, $b(B_3)=A_3$, and $b^3=1$. Then $[[b^{-1}, a^{-1}], [b, a]]$ acts as $a$ on the set $(A_1\cap B_1)\cup a(A_1\cap B_1)\cup a^2(A_1\cap B_1)$ and identically outside of it.
\end{lemma}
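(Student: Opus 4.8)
The plan is to treat the statement as a purely combinatorial identity about permutations and to verify it by a direct, but carefully organized, computation. The decisive structural observation is that the hypotheses force $\supp(a)\cap\supp(b)=A_1\cap B_1$: indeed $\supp(a)=A_1\cup A_2\cup A_3$ and $\supp(b)=B_1\cup B_2\cup B_3$, and by assumption every cross pair $A_i\cap B_j$ with $(i,j)\neq(1,1)$ is empty. Write $C=A_1\cap B_1$. Then the five sets $C$, $a(C)\subseteq A_2$, $a^2(C)\subseteq A_3$, $b(C)\subseteq B_2$, $b^2(C)\subseteq B_3$ are pairwise disjoint, since each lies in a distinct one of the six given blocks; these ``tracks'' are the only sets on which the relevant commutators will be non-trivial.

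I would carry out the computation in two stages, using the convention $[x,y]=x^{-1}y^{-1}xy$ and composing permutations right to left. First I would compute the inner commutators $p:=[b^{-1},a^{-1}]=bab^{-1}a^{-1}$ and $q:=[b,a]=b^{-1}a^{-1}ba$. Tracing a point through the four factors, and using at each step only which of the six blocks it currently occupies, one finds that $q$ is the $3$-cycle sending $C\to b^2(C)\to a^2(C)\to C$ and fixing everything else, while $p$ is the $3$-cycle sending $C\to b(C)\to a(C)\to C$ and fixing everything else. The point that makes this legitimate is that whether $a^{\pm1}$ or $b^{\pm1}$ moves the current point depends only on the block it occupies; since $a$ and $b$ permute the blocks as fixed bijections, this ``block itinerary'' is identical for every representative of $C$, so the fact that $C$ is a set rather than a single point causes no difficulty.

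Second, I would observe that $p$ and $q$ are now two $3$-cycles on the five-element system of disjoint sets $\{C,a(C),a^2(C),b(C),b^2(C)\}$ meeting in exactly the one set $C$. Computing $[p,q]=p^{-1}q^{-1}pq$ in the same block-by-block fashion then yields the $3$-cycle $C\to a(C)\to a^2(C)\to C$, which is precisely the restriction of $a$ to $C\cup a(C)\cup a^2(C)$; the sets $b(C)$ and $b^2(C)$ are returned to themselves by the outer commutator, and everything outside all five tracks is fixed throughout. This is exactly the claim.

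The routine but genuinely necessary part of the argument, and the main place where care is required, is the bookkeeping showing that everything outside $C\cup a(C)\cup a^2(C)$ is fixed. Concretely, I would verify that $p$ and $q$ each act trivially on the full $a$-orbit of any point of $A_1\setminus C$ and on the full $b$-orbit of any point of $B_1\setminus C$ (these orbits account for all leftover parts of the six blocks), and that the outer commutator returns $b(C)$ and $b^2(C)$ to themselves; each of these is a short trace through the definitions using the disjointness of the blocks. I expect no conceptual obstacle, only the need to keep the four-factor compositions and the five disjoint tracks straight, and organizing every verification around the block membership of the moving point is what keeps the computation manageable.
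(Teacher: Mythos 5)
Your proposal is correct and follows essentially the same route as the paper: both isolate the five pairwise disjoint tracks $C, a(C), a^2(C), b(C), b^2(C)$ with $C=A_1\cap B_1$, compute the inner commutators as the $3$-cycles $(C,b(C),a(C))$ and $(C,b^2(C),a^2(C))$ (the paper phrases this as the model computation with $(1,2,3)$ and $(1,4,5)$ in $S_5$, which is exactly your ``block itinerary'' observation), and then check triviality on the $a$-orbit of $A_1\setminus C$ and the $b$-orbit of $B_1\setminus C$. The computations you report agree with the paper's, so no gap.
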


Note that we use the left action here, but the usual commutator $[g, h]=g^{-1}h^{-1}gh$.

\begin{proof}
Let $C=A_1\cap B_1$. Then the sets $C, a(C), a^2(C), b(C), b^2(C)$ are pairwise disjoint. The permutations $a$ and $b$ act as cycles of length three permuting $C, a(C), a^2(C)$ and $C, b(C), b^2(C)$, respectively. The element $[[b^{-1}, a^{-1}], [b, a]]$ acts then on these fives sets in the same way as the similar expression involving commutators of the permutations $a=(1, 2, 3)$ and $b=(1, 4, 5)$ act on $\{1, 2, 3, 4, 5\}$. We have $[b, a]=b^{-1}a^{-1}ba=(1, 5, 3)$ and $[b^{-1}, a^{-1}]=bab^{-1}a^{-1}=(1, 4, 2)$, and $[[b^{-1}, a^{-1}, [b, a]]=(1, 4, 2)^{-1}(1, 5, 3)^{-1}(1, 4, 2)(1, 5, 3)=(1, 2, 3)=a$.

The set $A'=(A_1\setminus C)\cup a(A_1\setminus C)\cup a^2(A_1\setminus C)$ is $a$-invariant, and $b$ acts trivially on it. It follows that the restriction of $[b^{-1}, a^{-1}]$ and $[b, a]]$ to $A'$ is equal to the restriction of $[1, a^{-1}]$ and $[1, a]$, which are trivial. It follows that $[[b^{-1}, a^{-1}], [b, a]]$ acts trivially on $A'$. The same argument shows that $[[b^{-1}, a^{-1}], [b, a]]$ acts trivially on $(B_1\setminus C)\cup b(B_1\setminus C)\cup b^2(B_1\setminus C)$.
\end{proof}

As a corollary, we get the following relation between the elements of the form $((f, P), g_1, g_2, g_3)$.

\begin{corollary}
\label{cor:patchesunion}
Let $\pi_1$, $\pi_2$ be patches, $g_1, g_2, h_1, h_2$ be elements of $\Z^d$ such that $\pi_1, \pi_1+g_1, \pi_1+g_2, \pi_2, \pi_2+h_1, \pi_2+h_2$ are pairwise incompatible except for the pair $\pi_1$ and $\pi_2$. Then
\[[[T_{\pi_2, (0, h_1, h_2)}^{-1}, T_{\pi_1, (0, g_1, g_2)}^{-1}], [T_{\pi_2, (0, h_1, h_2)}, T_{\pi_1, (0, g_1, g_2)}]]=T_{\pi_1\cup\pi_2, (0, g_1, g_2)}.\]
\end{corollary}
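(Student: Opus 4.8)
The plan is to reduce Corollary~\ref{cor:patchesunion} directly to the preceding lemma by exhibiting the six cylindrical sets as the sets $A_1, A_2, A_3, B_1, B_2, B_3$ of that lemma. Concretely, I would set $X = \mathbf{C}$ and take
\[
A_1 = \mathcal{W}_{\pi_1},\ A_2 = \mathcal{W}_{\pi_1+g_1},\ A_3 = \mathcal{W}_{\pi_1+g_2},\quad
B_1 = \mathcal{W}_{\pi_2},\ B_2 = \mathcal{W}_{\pi_2+h_1},\ B_3 = \mathcal{W}_{\pi_2+h_2}.
\]
With $a = T_{\pi_1,(0,g_1,g_2)}$ and $b = T_{\pi_2,(0,h_1,h_2)}$, the defining formula for $T$ shows that $a$ cyclically permutes $A_1 \to A_2 \to A_3 \to A_1$ (since $(g_1-0)$ carries $\mathcal{W}_{\pi_1}$ to $\mathcal{W}_{\pi_1+g_1}$, etc.), acts trivially off $A_1\cup A_2\cup A_3$, and satisfies $a^3 = 1$; symmetrically for $b$. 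The incompatibility hypothesis says precisely that all six cylindrical sets are pairwise disjoint \emph{except} the pair $\mathcal{W}_{\pi_1}, \mathcal{W}_{\pi_2}$, i.e. $A_1$ and $B_1$ are the only pair with nonempty intersection, which is exactly the disjointness pattern required by the lemma.

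The heart of the argument is then just to invoke the lemma: the iterated commutator $[[b^{-1},a^{-1}],[b,a]]$ acts as $a$ on $C \cup a(C) \cup a^2(C)$ and trivially elsewhere, where $C = A_1 \cap B_1 = \mathcal{W}_{\pi_1} \cap \mathcal{W}_{\pi_2}$. I would then identify this intersection with the cylindrical set of the union patch: by the remark in the text, $\mathcal{W}_{\pi_1}\cap\mathcal{W}_{\pi_2} = \mathcal{W}_{\pi_1\cup\pi_2}$ (this uses that $\pi_1$ and $\pi_2$ are compatible, which the nonempty intersection guarantees). Since $a = T_{\pi_1,(0,g_1,g_2)}$ restricts to the cyclic permutation $g_1, g_2$ of the triple $\mathcal{W}_{\pi_1\cup\pi_2}, g_1(\mathcal{W}_{\pi_1\cup\pi_2}), g_2(\mathcal{W}_{\pi_1\cup\pi_2})$ on $C$ and its $a$-orbit, the element acting as $a$ on $C \cup a(C) \cup a^2(C)$ and identically outside is by definition exactly $T_{\pi_1\cup\pi_2,(0,g_1,g_2)}$.

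I expect the only genuinely delicate point to be verifying that the map produced by the lemma really coincides with $T_{\pi_1\cup\pi_2,(0,g_1,g_2)}$ as an element of $[[\Z^d]]$, and not merely as an abstract permutation of three sets. The lemma guarantees that $a$ cyclically permutes $C, a(C), a^2(C)$, but one must check that on $\mathcal{W}_{\pi_1\cup\pi_2}$ the transformation $a$ agrees with the shift $g_1$ (rather than some other bijection), and similarly $a$ agrees with $g_2-g_1$ on $g_1(\mathcal{W}_{\pi_1\cup\pi_2})$, so that the resulting element is the prescribed $T$ with the correct germs. This is immediate from the definition of $T_{\pi_1,(0,g_1,g_2)}$ once one notes $\mathcal{W}_{\pi_1\cup\pi_2} \subset \mathcal{W}_{\pi_1}$, but it is the step where the identification of $C$ with the union-patch cylinder must be used carefully. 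Everything else is a direct translation of the hypotheses into the language of the lemma.
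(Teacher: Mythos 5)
Your proposal is correct and is exactly the intended derivation: the paper states this as an immediate corollary of the preceding lemma, obtained by taking $A_1,A_2,A_3,B_1,B_2,B_3$ to be the six cylindrical sets and $a,b$ the two $T$-elements, with $A_1\cap B_1=\mathcal{W}_{\pi_1}\cap\mathcal{W}_{\pi_2}=\mathcal{W}_{\pi_1\cup\pi_2}$. Your extra care about the germs (that the resulting permutation agrees with the shifts $g_1$ and $g_2-g_1$ on the relevant pieces, so it is the element $T_{\pi_1\cup\pi_2,(0,g_1,g_2)}$ of the full group and not just an abstract $3$-cycle of sets) is a point the paper leaves implicit but is handled correctly.
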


We will use the usual $\ell_1$ metric on $\Z^d$, i.e., the word metric associated with the standard generating set of $\Z^d$. Denote by $B(R)$ the ball of radius $R$ with the center in $0\in\Z^d$ for this metric.

By Lemma~\ref{lem:incompatible}, there exists $R_1$ such that for every $w\in\mathbf{C}$ the patches $\pi=(B(R_1), w|_{B(R_1)})$ and $\pi+g$ are incompatible for every $g\in\Z^d$ of length $\le 3$.

Let $\{e_1, e_2, \ldots, e_d\}$ be the standard generating set of $\Z^d$.
Denote by $\mathcal{T}_R$ the set of elements of $[[\Z^d]]'$ of the form $T_{\pi, (0, e_i, -e_i)}$, where $\pi$ runs through the set of all patches of the form $(B(R), w|_{B(R)})$ for $w\in\mathbf{C}$.

\begin{proposition}
If $R\ge R_1+2$, then the group generated by $\mathcal{T}_R$ contains $\mathcal{T}_{R+1}$.
\end{proposition}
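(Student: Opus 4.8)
The plan is to show that every generator of $\mathcal{T}_{R+1}$ lies in $\langle\mathcal{T}_R\rangle$, that is, that $T_{\pi', (0, e_i, -e_i)} \in \langle\mathcal{T}_R\rangle$ for each patch $\pi' = (B(R+1), w|_{B(R+1)})$ with $w\in\mathbf{C}$. The starting observation is that $B(R+1)$ is covered by $B(R)$ together with its $2d$ neighbouring translates: writing $g_1, \dots, g_{2d}$ for the vectors $\pm e_1, \dots, \pm e_d$, one checks that $B(R+1) = B(R) \cup \bigcup_{k=1}^{2d}(B(R) + g_k)$, since any point of $\ell_1$-length $R+1$ has a nonzero coordinate and hence lies in $B(R) + g_k$ for a suitable $k$. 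Consequently $\pi'$ is the union of $\pi^{(0)} = \pi'|_{B(R)}$ with the translated patches $\sigma_k = \pi'|_{B(R) + g_k}$, each supported on a single translate of $B(R)$. The idea is to assemble $\pi'$ from these $B(R)$-sized pieces by repeatedly applying Corollary~\ref{cor:patchesunion}.

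For the induction, set $\pi^{(k)} = \pi'|_{B(R) \cup (B(R)+g_1) \cup \dots \cup (B(R)+g_k)}$, so that $\pi^{(2d)} = \pi'$ and $\pi^{(k)} = \pi^{(k-1)} \cup \sigma_k$. The base element $T_{\pi^{(0)}, (0, e_i, -e_i)}$ lies in $\mathcal{T}_R$ by definition, and the pair $(\pi^{(k-1)}, \sigma_k)$ is always compatible, both being restrictions of the same $w$. Assuming $T_{\pi^{(k-1)}, (0, e_i, -e_i)} \in \langle\mathcal{T}_R\rangle$ and that the translated generator $T_{\sigma_k, (0, h_1, h_2)} \in \langle\mathcal{T}_R\rangle$ for a suitable pair $(h_1, h_2)$, Corollary~\ref{cor:patchesunion} produces $T_{\pi^{(k-1)} \cup \sigma_k, (0, e_i, -e_i)} = T_{\pi^{(k)}, (0, e_i, -e_i)}$, the directions $(0, e_i, -e_i)$ being unchanged because the corollary retains the directions of its first argument. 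After $2d$ steps this yields $T_{\pi', (0, e_i, -e_i)}$.

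It remains to place the translated generators $T_{\sigma_k, (0, h_1, h_2)}$ inside $\langle\mathcal{T}_R\rangle$, which I would do by conjugation. Write $g_k = \pm e_j$ and $\tau = \sigma_k - g_k$, a patch on $B(R)$; then $s = T_{\tau, (0, g_k, -g_k)}$ and $a = T_{\tau, (0, e_l, -e_l)}$ both lie in $\mathcal{T}_R$. Since $s$ cyclically permutes $\mathcal{W}_\tau, \mathcal{W}_{\tau + g_k}, \mathcal{W}_{\tau - g_k}$, it maps $\mathcal{W}_\tau$ onto $\mathcal{W}_{\tau + g_k} = \mathcal{W}_{\sigma_k}$ by the translation $g_k$; choosing $l \ne j$ (possible when $d \ge 2$) makes $\mathcal{W}_{\tau \pm e_l}$ disjoint from the support of $s$, so $s$ fixes these two sets. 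Hence $s a s^{-1}$ is the three-cycle on $\mathcal{W}_{\sigma_k}, \mathcal{W}_{\tau + e_l}, \mathcal{W}_{\tau - e_l}$, namely $T_{\sigma_k, (0, e_l - g_k, -e_l - g_k)}$, which supplies the translated generator with $(h_1, h_2) = (e_l - g_k, -e_l - g_k)$. The case $d = 1$ is the classical Cantor minimal system, for which one instead conjugates by a suitable product of generators.

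I expect the main obstacle to be verifying, at each application of Corollary~\ref{cor:patchesunion}, the six-fold incompatibility hypothesis: that $\pi^{(k-1)}, \pi^{(k-1)} + e_i, \pi^{(k-1)} - e_i, \sigma_k, \sigma_k + h_1, \sigma_k + h_2$ are pairwise incompatible apart from the pair $(\pi^{(k-1)}, \sigma_k)$. Every translation vector occurring here, and every difference of two of them, has $\ell_1$-length at most $3$ (the $g_k$ have length $1$ and the commutator directions $h_1, h_2$ length at most $2$); and the hypothesis $R \ge R_1 + 2$ guarantees that after translating by any such vector each of the six patches still contains a sub-patch of the form $(B(R_1), u|_{B(R_1)})$ centred appropriately. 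Lemma~\ref{lem:incompatible}, through the choice of $R_1$, then forces the required incompatibilities. Carrying out this bookkeeping cleanly — keeping all the relevant translations within length $3$ and exhibiting the centred $B(R_1)$-subpatch in each of the six patches — is the technical heart of the argument; the rest is the formal induction above.
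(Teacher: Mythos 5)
Your overall strategy is the same as the paper's: assemble the $B(R+1)$-patch by adjoining the translates $B(R)+g_k$ one at a time, each step being an application of Corollary~\ref{cor:patchesunion}, with the pairwise incompatibilities supplied by the choice of $R_1$ (all the relevant offsets are distinct vectors of $\ell_1$-length at most $3$, and each of the six patches contains a correspondingly translated copy of $(w|_{B(R_1)},B(R_1))$ because $R\ge R_1+2$). That bookkeeping, which you defer, does go through for $d\ge 2$ exactly as you sketch it. The one point where you genuinely diverge is in how you produce the generator based at the translated patch $\sigma_k$: you conjugate $T_{\tau,(0,e_l,-e_l)}$ by $T_{\tau,(0,g_k,-g_k)}$ with $l\ne j$. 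This works, and it has the pleasant side effect that the resulting directions $(e_l-g_k,\,-e_l-g_k)$ have length $2$ and therefore never collide with $\pm e_i$, so you avoid a case split. The paper instead observes that the translated generator is \emph{already} in $\mathcal{T}_R$ up to re-indexing: $T_{\rho_h,(0,h,2h)}=T_{\rho_h-h,(0,h,-h)}$, where $\rho_h-h=((w-h)|_B,B)$ is a $B(R)$-patch of the shifted point $w-h$. The price of that simpler device is that the directions $(h,2h)$ can collide with $(g,-g)$ when $h=\pm g$, which the paper resolves by a second application of the corollary with all data shifted by $-g$ (taking $g_1=-2g$, $g_2=-g$, $h_1=h$, $h_2=-h$).

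The genuine gap in your write-up is the case $d=1$. There $S=\{e_1,-e_1\}$, no index $l\ne j$ exists, and the sentence ``one instead conjugates by a suitable product of generators'' is not a proof: every conjugator you can build from $\mathcal{T}_R$ moves cylinders only in the $e_1$-direction, so the triple of offsets attached to $\sigma_k$ necessarily consists of multiples of $e_1$, and you must still arrange that it avoids collisions with $(0,e_1,-e_1)$ while keeping all pairwise differences inside the length-$3$ window where $R_1$ guarantees incompatibility. This is exactly the configuration that the paper's second, shifted application of Corollary~\ref{cor:patchesunion} is designed to handle, and it works uniformly in $d$. Since the statement is claimed for all $d\ge 1$, you should either import that device or spell out the $d=1$ conjugation explicitly.
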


\begin{proof}
Denote $S=\{\pm e_1, \pm e_2, \ldots, \pm e_d\}$.
Let $B\subset\Z^d$ be a finite subset containing $B(R_1+2)$. 
Define the patches $\rho_0=(w|_B, B)$, $\rho_h=(w|_{B+h}, B+h)$,  for $h\in S$. Note that the patch $\rho_h$ contains the patch $(w|_{B(R_1)}, B(R_1))$ and that $\rho_h-h=((w-h)|_B, B)$.

Let us apply Corollary~\ref{cor:patchesunion} for $\pi_1=\rho_0$, $\pi_2=\rho_h$,
$g_1=g$, $g_2=-g$, $h_1=h$, $h_2=2h$, where $g, h$ are different elements of $S$. Since $\rho_0$ and $\rho_h$ both contain the patch $(w|_{B(R_1)}, B(R_1))$, the patches $\pi_1, \pi_1+g_1$, $\pi_1+g_2$, $\pi_2$, $\pi_2+h_1$, and $\pi_2+h_2$ are pairwise incompatible, except for $\pi_1$ and $\pi_2$, which are both patches of $w$. It follows that we can apply Corollary~\ref{cor:patchesunion}, hence
\[[[T_{\rho_h, (0, h, 2h)}^{-1}, T_{\rho_0, (0, g, -g)}^{-1}], [T_{\rho_h, (0, h, 2h)}, T_{\rho_0, (0, g, -g)}]]=T_{\rho_0\cup \rho_h, (0, g, -g)}.\]
Note that $T_{\rho_h, (0, h, 2h)}=T_{\rho_h-h, (-h, 0, h)}=T_{\rho_h-h, (0, h, -h)}$.

Let us apply now Corollary~\ref{cor:patchesunion} to $\pi_1=\rho_0-g$, $\pi_2=\rho_g-g$, $g_1=-2g$, $g_2=-g$, $h_1=h$, $h_2=-h$. The patches $\pi_1$ and $\pi_2$ are patches of $w-g$, and contain $((w-g)|_{B(R_1)}, B(R_1))$. It follows that, in the same way as above, we can apply Corollary~\ref{cor:patchesunion}, and get
\[[[T_{\rho_g-g, (0, h, -h)}^{-1}, T^{-1}_{\rho_0-g, (0, -2g, -g)}], 
[T_{\rho_g-g, (0, h, -h)}, T_{\rho_0-g, (0, -2g, -g)}]]=T_{(\rho_0-g)\cup(\rho_h-g), (0, -2g, -g)}.\]
Recall that $\rho_g-g=((w-g)|_B, B)$ and that we have $T_{\rho_0-g, (0, -2g, -g)}=T_{\rho_0, (g, -g, 0)}=T_{\rho_0, (0, g, -g)}$. We also have $T_{(\rho_0-g)\cup (\rho_h-g), (0, -2g, -g)}=T_{\rho_0\cup\rho_h, (0, g, -g)}$.

We have shown that the group generated by the set
\[\{T_{\pi, (0, g, -g)}\;:\;g\in S, \pi=(w|_B, B), w\in\mathbf{C}\}\]
contains the set
\[\{T_{\pi, (0, g, -g)}\;:\;g\in S, \pi=(w|_{B\cup B+h}, B\cup B+h), w\in\mathbf{C}, h\in S\}.\]
Since $B(R+1)=\bigcup_{h\in S}B(R)+h$, this finishes the proof of the proposition.
\end{proof}

It follows that the group generated by $\mathcal{T}_{R_1+2}$ contains $\mathcal{T}_R$ for every $R\ge R_1+2$. For every cylindrical set $U\subset\mathbf{C}$ there exists $R$ such that $U$ is equal to the disjoint union of cylindrical sets $\mathcal{W}_\pi$ such that $\pi$ is a patch with support $B(R)$. It follows that every element of the form $T_{\pi, (0, e_i, -e_i)}$ can be written as a product of elements of the same type such that $\pi$ is a patch with support $B(R)$ for some $R$ big enough. Consequently, the group generated by $\mathcal{T}_{R_1+2}$ contains all elements of the form $T_{\pi, (0, e_i, -e_i)}$.

The proof of Theorem~\ref{main} is finished by the following, since the set $\mathcal{T}_{R_1+2}$ is finite.

\begin{proposition}
The derived subgroup of the full group of the action of $\Z^d$ on $\mathsf{C}$ is generated by the set of elements of the form $T_{\pi, (0, e_i, -e_i)}$.
\end{proposition}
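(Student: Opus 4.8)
The plan is to prove the two inclusions $\mathsf{A}\subseteq[[\Z^d]]'$ and $[[\Z^d]]'\subseteq\mathsf{A}$, where $\mathsf{A}$ denotes the subgroup generated by all elements $T_{\pi,(0,e_i,-e_i)}$. It is convenient to note first that $\mathsf{A}$ equals the group generated by all $T_{U,(0,e_i,-e_i)}$ with $U\subseteq\mathbf{C}$ an arbitrary clopen set: every clopen set is a finite disjoint union of cylindrical sets $\mathcal{W}_\pi$, and since the three translates of the moved set are pairwise disjoint, $T_{U,(0,e_i,-e_i)}$ factors as the commuting product of the corresponding $T_{\mathcal{W}_\pi,(0,e_i,-e_i)}$.

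The inclusion $\mathsf{A}\subseteq[[\Z^d]]'$ is the easy direction and is handled by Lemma and Corollary~\ref{cor:patchesunion}. It suffices to place each generator in the derived subgroup, and by the discussion preceding the proposition it is enough to treat the finitely many generators in $\mathcal{T}_{R_1+2}$, whose patches have support $B(R_1+2)\supseteq B(R_1)$. For such a generator I would write its patch as a union $\pi=\pi_1\cup\pi_2$ of two sub-patches whose supports still contain $B(R_1)$ and choose auxiliary vectors $h_1,h_2$ of small length; Lemma~\ref{lem:incompatible} then supplies the incompatibility hypotheses of Corollary~\ref{cor:patchesunion}, which exhibits $T_{\pi,(0,e_i,-e_i)}$ as the iterated commutator $[[\,\cdot\,,\,\cdot\,],[\,\cdot\,,\,\cdot\,]]$ of two elements of the full group. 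Hence every generator, and therefore $\mathsf{A}$, lies in $[[\Z^d]]'$.

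For the reverse inclusion I would proceed in two steps. First I would enlarge the generating set inside $\mathsf{A}$ by showing that $\mathsf{A}$ contains every three-cycle $T_{U,(g_1,g_2,g_3)}$ with arbitrary $g_1,g_2,g_3\in\Z^d$. After a translation one may assume the element has the form $T_{V,(0,a,b)}$, and I would build the long vectors $a,b$ out of the elementary steps $\pm e_i$ by a token-sliding argument: refining $V$ to small cylinders so that the finitely many translates that occur are pairwise disjoint (Lemma~\ref{lem:incompatible}), the generators $T_{\,\cdot\,,(0,e_i,-e_i)}$ act on the resulting clopen pieces exactly as the centered collinear three-cycles $(x-e_i,x,x+e_i)$ act on tokens placed at the points of $\Z^d$. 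Since along each axis the consecutive three-cycles already generate the finitary alternating group, these generate all finitely supported even permutations of the tokens, and $T_{V,(0,a,b)}$ induces such an even permutation; an induction on $|a|+|b|$, sliding the two moved pieces into their target positions one generator-step at a time (using minimality to provide spare room), then places $T_{V,(0,a,b)}$ in $\mathsf{A}$.

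The remaining step is to show $[[\Z^d]]'$ is contained in the group generated by all three-cycles, hence in $\mathsf{A}$. Every $g\in[[\Z^d]]$ is determined by a finite clopen partition and finitely many translations; refining to a common partition $\mathcal{P}$ on which $g$ carries each piece to a piece by a single translation, the induced permutation of $\mathcal{P}$ decomposes into cycles and presents $g$ as a product of cyclic elements with pairwise disjoint supports. Each cyclic element is a product of clopen transpositions together with a single local translation (a piece carried into itself by its holonomy vector, which is indeed a cylinder translate by freeness, Lemma~\ref{lem:free}); this yields the two homomorphisms $\mathrm{sgn}\colon[[\Z^d]]\to\Z/2$ counting transpositions and an index map into an abelian group recording the holonomies. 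Both targets are abelian, so every commutator lies in the common kernel. The heart of the matter, and the step I expect to be the main obstacle, is the converse: that an element of trivial sign and trivial index is a product of three-cycles. The sign contribution is classical (an even product of clopen transpositions is a product of three-cycles, borrowing a disjoint clopen set via minimality when two transpositions are disjoint), but rewriting a product of local translations of trivial total index through three-cycles is the delicate point, and it is here that Lemma~\ref{lem:free} and Lemma~\ref{lem:incompatible} must be exploited most carefully.
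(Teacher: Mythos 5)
There is a genuine gap, and you have put your finger on it yourself: the final step, showing that every element of $[[\Z^d]]'$ is a product of clopen three-cycles, is exactly the part you leave open. Your proposed route --- a sign homomorphism and an index map into an abelian group, followed by a proof that the common kernel is generated by three-cycles --- is not a detail to be ``exploited carefully'' with Lemmas~\ref{lem:free} and~\ref{lem:incompatible}; characterizing the kernel of the index map of a topological full group is a substantial theorem in its own right (it is essentially the content of Matui's work), and nothing in the two cited lemmas supplies it. The paper avoids this entirely by quoting from~\cite{matui1} that $[[\Z^d]]'$ is simple \emph{and contained in every non-trivial normal subgroup} of $[[\Z^d]]$. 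Given that, the argument collapses to two observations: the set $\mathcal{T}$ of all order-three elements cyclically permuting three disjoint clopen sets is invariant under conjugation, so $\langle\mathcal{T}\rangle$ is a non-trivial normal subgroup and hence contains $[[\Z^d]]'$; and each element of $\mathcal{T}$ is a commutator of two clopen transpositions, so $\langle\mathcal{T}\rangle\subseteq[[\Z^d]]'$. You should replace your last paragraph with this appeal to the minimal-normal-subgroup property rather than attempt the index-kernel computation.

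The middle step of your argument --- generating every $T_{U,(g_1,g_2,g_3)}$ from the elements $T_{\pi,(0,e_i,-e_i)}$ --- is the right idea and matches the paper in spirit, but your ``token-sliding'' sketch asserts without proof that the centered collinear three-cycles $(x-e_i,\,x,\,x+e_i)$ generate the finitary alternating group of $\Z^d$. For $d=1$ this follows from the classical fact about consecutive three-cycles, but mixing directions is not automatic: the paper devotes the technical Lemma~\ref{l1} (an induction on $d$ with an explicit hand-checked base case on $\{a,b,c\}^2$) precisely to passing from three-cycles whose displacement vectors lie in a single direct factor to arbitrary ones. That combinatorial fact needs to be proved, not just invoked; your induction on $|a|+|b|$ as stated does not address how to reach a non-collinear triple $(0,a,b)$ from collinear moves.
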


\begin{proof}
It is known, see~\cite{matui1}, that the derived subgroup of $[[\Z^d]]$ is simple and is contained in every non-trivial normal subgroup of $[[\Z^d]]$. 

Consider the set $\mathcal{T}\subset[[\Z^d]]$ of all elements elements of order three  permuting cyclically three disjoint clopen subsets $U_1, U_2, U_3$ of $\mathbf{C}$ and acting identically outside their union. The set $\mathcal{T}$ is obviously invariant under conjugation by elements of $[[\Z^d]]$, hence the group generated by $\mathcal{T}$ is normal. On the other hand, we have $\mathcal{T}\subset [[\Z^d]]'$, as every element of $\mathcal{T}$ is equal to the commutator of two transformations: one permuting $U_1$ with $U_2$, and the other permuting $U_2$ with $U_3$. Consequently, $\mathcal{T}$ generates $[[\Z^d]]'$.

For every element $T\in\mathcal{T}$ permuting cyclically clopen sets $U_1, U_2, U_3$, there exists partitions of $U_i$ into cylindrical sets such that $T$ maps a piece of the partition to a piece of the partition, and restriction of $T$ to every piece of the partitions is equal to the restriction of an element of $\Z^d$. It follows that $T$ is a product of a finite set of elements of the form $T_{\pi, (g_1, g_2, g_3)}$. It remains to show that we can generate all elements of the form $T_{\pi, (g_1, g_2, g_3)}$ by elements of the form $T_{\pi, (0, e_i, -e_i)}$. It is well known that the alternating group $A_n$ is generated by cycles $(k, k+1, k+2)$. It follows that the group generated by $T_{\pi, (0, e_i, -e_i)}$ contains the set of elements of the form $T_{\pi, (g_1, g_2, g_3)}$, where $g_i$ belong to one direct factor of $\Z^d$. 

Let us prove the following technical lemma.

\begin{lemma}\label{l1}
Let $X_d=\{x_1\ldots x_d|x_i \in \{a,b,c\}, \ 1\le i \le d \}$ be the $3^d$-element set of $d$-letter words over the alphabet $\{a,b,c\}$, and let $S_{X_d}$ be the symmetric group of permutations of $X_d$. Denote the alternating subgroup of even permutations of $X_d$ by $A_{X_d}$. Consider the set $B_d$ of all elements of the type $(XaY\ XbY\ XcY) \in S_{X_d}$, where $X$ and $Y$ are arbitrary (possibly, empty) words such that $|X|+|Y|=d-1$. Then $A_{X_d}$ is generated by the set $B_d$.
\end{lemma}

\begin{proof}
The lemma can be proved by induction on $d$.

For $d=2$, we use the well-known fact that $A_9$ is generated by 3-cycles $\{(1 2 3), (2 3 4), \ldots, (7 8 9)\}$. To apply this fact, we need to show that all 7 elements $(aa \ ab \ ac)$, $(ab \ ac \ ba)$, $(ac \ ba \ bb)$, $(ba \ bb \ bc)$, $(bb \ bc \ ca)$, $(bc \ ca \ cb)$, $(ca \ cb \ cc)$ are generated by $B_2$. This can be checked by hand:

\begin{itemize}
	\item $(aa \ ab \ ac) \in B_d$;
	\item $(ab \ ac \ ba) = (aa \ ba \ ca)(aa \ ab \ ac)(aa \ ca \ ba) $;
	\item $(ac \ ba \ bb) = (ac \ cc \ bc)(ba \ bb \ bc)(ac \ bc \ cc) $;
	\item $(ba \ bb \ bc) \in B_d$;
	\item $(bb \ bc \ ca) = (aa \ ba \ ca)(ba \ bb \ bc)(aa \ ca \ ba) $;
	\item $(bc \ ca \ cb) = (ac \ cc \ bc)(ca \ cb \ cc)(ac \ bc \ cc) $;
	\item $(ca \ cb \ cc) \in B_d$.
\end{itemize}

Suppose the statement holds for $d=k$ and consider the case $d=k+1$. Since the alternating group is generated by 3-cycles, it's sufficient to show that every 3-cycle is generated by $B_{k+1}$. Assume we have a cycle $(Ax \ By \ Cz)$, where $A,B,C \in X_k$ are pairwise distinct, and $x,y,z \in \{a,b,c\}$, not necessarily distinct. We know the following:

\begin{itemize}
	\item $(Ax \ Bx \ Cx)$, $(Ay \ By \ Cy)$, $(Az \ Bz \ Cz)$ are generated by $B_{k+1}$. Indeed, we can take the elements of $B_k$ generating $(A \ B \ C)$ and append the needed letter to each of them.
	\item $(Ax \ Ay \ Az)$, $(Bx \ By \ Bz)$, $(Cx \ Cy \ Cz)$ are in $B_{k+1}$ by definition.
\end{itemize}

Then, applying the induction base for the set $\{A,B,C\} \times \{a,b,c\}$, we conclude that $(Ax \ By \ Cz)$ is also generated by $B_{k+1}$.

In case $A,B,C$ are not distinct, we can use a slightly modified version of the proof above. If, for example, $A=B$ (which automatically implies $x \neq y$), we can take an arbitrary word $D \in X_k$ distinct from $A$ and $C$ in order to apply the induction base to $\{A,C,D\} \times \{a,b,c\}$. Clearly, the 3-cycle $(Ax \ Ay \ Cz)$ will belong to $X_{k+1}$



The induction step is complete.
\end{proof}

Lemma~\ref{l1} implies that the group generated by all elements of the form $T_{\pi, (g_1, g_2, g_3)}$, where $g_i$ belong to one factor of $\Z^d$, contains all elements of the form $T_{\pi, (g_1, g_2, g_3)}$, where $g_i\in\Z^d$ are now arbitrary. This finishes the proof of the proposition and Theorem~\ref{main}.
\end{proof}

\section{Topological full group and interval exchange group}

Let $\alpha_1, \alpha_2, \ldots, \alpha_d$ be irrational numbers such that the additive group $H=\langle\alpha_1, \alpha_2, \ldots, \alpha_d\rangle/\mathbb{Z}$ generated by them modulo $\mathbb{Z}$ is isomorphic to $\mathbb{Z}^d$ (this implies that $\langle\alpha_1, \alpha_2, \ldots, \alpha_d\rangle$ is also isomorphic to $\mathbb{Z}^d$). The group $H$ is a subgroup of the circle $\mathbb{R}/\mathbb{Z}$, and hence acts on it in the natural way. By the classical Kroneker's theorem, the action of each subgroup $\langle\alpha_i\rangle$ on $\mathbb{R}/\mathbb{Z}$ is minimal, hence the action of $H$ on $\mathbb{R}/\mathbb{Z}$ is also minimal.

Let us lift $H$ as a set to $[0, 1]$ by the natural quotient map $[0, 1]\to\mathbb{R}/\mathbb{Z}$, and let $W\subset[0, 1]$ be the obtained set.

Let us replace each number $q\in W\subset[0, 1]$ by two copies: $q_{-0}$ and $q_{+0}$. Here we identify $0_{-0}$ with $1$ and $0_{+0}$ with $0$, $1_{-0}$ with 1 and $1_{+0}$ with 0, according to the natural cyclic order on $\mathbb{R}/\mathbb{Z}$ (seen also as the quotient of the interval $[0, 1]$). Denote by $R_H$ the obtained set (equal to the disjoint union of $[0, 1]\setminus W$ and the set of doubled points $W$). The set $R_H$ is ordered in the natural way (we assume that $q_{-0}<q_{+0}$), and the order is linear (total).

Let us introduce the order topology on $R_H$. Recall, that it is the topology generated by the open intervals $(a, b)=\{x\in R_H\;:\;a<x<b\}$.

\begin{lemma}
The space $R_W$ is homeomorphic to the Cantor set.
\end{lemma}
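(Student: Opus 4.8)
The plan is to identify the space via Brouwer's topological characterization of the Cantor set: a nonempty space is homeomorphic to the Cantor set if and only if it is compact, metrizable, perfect, and totally disconnected. So I would verify these four properties for the order topology on $R_H$ (written $R_W$ in the statement). Throughout I use that $W$ is a \emph{countable dense} subset of $[0,1]$: countability is immediate from $H\cong\mathbb{Z}^d$, and density follows from the minimality of the $H$-action on $\mathbb{R}/\mathbb{Z}$ established just above. I also use the order-preserving collapse $c\colon R_H\to[0,1]$ sending both $q_{-0},q_{+0}$ to $q$ for $q\in W$ and fixing the remaining points, keeping in mind the cyclic identification at $0\sim 1$ that makes $R_H$ a genuine linear order with a least and a greatest element.

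First, compactness. A linearly ordered space with its order topology is compact if and only if the order is Dedekind complete and has a minimum and a maximum. The endpoints come from the identifications at $0\sim 1$, so it remains to check that every nonempty $A\subseteq R_H$ has a supremum. I would set $s=\sup c(A)\in[0,1]$ (which exists since $[0,1]$ is complete) and check by cases that $\sup A=s$ when $s\notin W$, and that $\sup A$ equals $s_{-0}$ or $s_{+0}$ according to whether the upper copy is attained when $s\in W$. Since the doubling only refines the order, completeness is preserved.

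Next I would establish zero-dimensionality, hence total disconnectedness, together with second countability. The key local feature is that for each $q\in W$ the two copies form a \emph{jump}: $q_{-0}<q_{+0}$ with no point in between. Thus $q_{-0}$ has $q_{+0}$ as immediate successor, and the down-set $\{z: z\le q_{-0}\}=\{z: z<q_{+0}\}$ is clopen. Given $x<y$, density of $W$ lets me pick $q\in W$ with $x\le q_{-0}<q_{+0}\le y$ (taking $q$ strictly between the collapses when $c(x)<c(y)$, and $q=c(x)$ when $x=q_{-0},y=q_{+0}$), and this clopen set separates them, proving total disconnectedness. The clopen intervals $(p_{-0},q_{+0})=[p_{+0},q_{-0}]$ with $p,q\in W$ form a countable family, and density of $W$ shows they are a basis, so $R_H$ is second countable. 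Since every linearly ordered space is normal, Urysohn's metrization theorem yields metrizability.

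Finally, perfectness: I would show no point is isolated. Every basic neighborhood of a point $x$ contains an order-interval extending strictly to one side of $c(x)$, and density of $W$ produces doubled points $p_{\pm0}$ inside it distinct from $x$, so $x$ is a limit point. With all four properties established, Brouwer's theorem gives the homeomorphism with the Cantor set. The main obstacle I anticipate is bookkeeping rather than conceptual: handling the cyclic endpoint identification at $0\sim 1$ so the order is genuinely linear and complete there, and verifying that doubling only the countable set $W$ (unlike the full split interval, which is compact but non-metrizable) keeps the space second countable---this is exactly where the countability of $W$ is essential.
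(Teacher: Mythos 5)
Your proof is correct and follows essentially the same route as the paper: both verify Brouwer's characterization, obtain compactness from order-completeness via the collapse to $[0,1]$, and use the countable family of clopen intervals $(a_{-0},b_{+0})=[a_{+0},b_{-0}]$ with endpoints in the dense countable set $W$ as a basis to get total disconnectedness and second countability (hence metrizability). Your treatment of perfectness and of the endpoint identification is slightly more explicit than the paper's, but the argument is the same.
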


\begin{proof}
	We use the following formulation of Brouwer's theorem: A topological space is a Cantor space if and only if it is non-empty, compact, totally disconnected, metrizable and has no isolated points. Note that by classical metrization theorems, we can replace metrizability by Hausdorffness and second countability.
	
The space $R_H$ is obviously non-empty, has no isolated points. For any $a, b\in W\cap[0, 1]$ such that $a<b$, we have $[a_{+0}, b_{-0}]=(a_{-0}, b_{+0})$, hence the intervals $(a_{-0}, b_{+0})$ are clopen. The set of such intervals is a basis of topology, since the set $W$ is dense. We also see that the space $R_H$ is second countable and Hausdorff.

Let $A\subset R_H$ be an arbitrary subset. Let us show that $\sup A$ and $\inf A$ exist, which will imply compactness. Let $\hat A$ be the image of $A$ in $[0, 1]$. We know that $\sup\hat A, \inf\hat A\in [0, 1]$ exist. If $\sup\hat A\notin W$, then the corresponding element of $R_H$ is also a supremum of $A$. If $\sup\hat A\in W$, then $\sup A=\sup\hat A_{-0}$, unless $\sup\hat A_{+0}\in A$, in which case $\sup A=\sup\hat A_{+0}$. Infima are treated in the same way.
\end{proof}

The action of $H$ on $\mathbb{R}/\mathbb{Z}$ naturally lifts to an action on $R_H$: we just set $h(q_{+0})=h(q)_{+0}$ and $h(q_{-0})=h(q)_{-0}$.

Denote by $IET_H$ the topological full group of the action $(H, R_H)$. For every element $g\in IET_H$ there exists a finite partition of $R_H$ into clopen subsets such that the action of $g$ on each of the subsets coincides with a translation by an element of $H$. Clopen subsets of $R_H$ are finite unions of intervals of the form $(a_{+0}, b_{-0})$ for $a, b\in H$. It follows that $g$ is an \emph{interval exchange transformation}: it splits the interval $[0, 1]$ into a finite number of intervals and then rearranges them. The endpoints of the intervals belong to $W$. Conversely, every interval exchange transformation such that the endpoints of the subintervals belong to $W$ is lifted to an element of $IET_H$.

We have proved the following.

\begin{lemma}
The group $IET_H$ is naturally isomorphic to the group of all interval exchange transformations of $[0, 1]$ such that the endpoints of the intervals into which $[0, 1]$ is split belong to $H$.
\end{lemma}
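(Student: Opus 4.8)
The plan is to exhibit the isomorphism explicitly through the point-collapsing map $p\colon R_H\to\mathbb{R}/\mathbb{Z}$ that sends both $q_{-0}$ and $q_{+0}$ to the class of $q$, and to check that it intertwines the two actions. This $p$ is a continuous, surjective, $H$-equivariant map which is injective off the doubled set $W$ and exactly two-to-one on it; the whole content of the lemma is that collapsing the doubled points turns a piecewise-$H$-translation homeomorphism of $R_H$ into an interval exchange transformation with breakpoints in $H$, and that this passage is reversible. First I would record, using the paragraph preceding the statement, that every $g\in IET_H$ admits a finite partition of $R_H$ into clopen intervals of the form $(a_{+0}, b_{-0})$ with $a,b\in H$, on each of which $g$ acts as a single translation $h\in H$.

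Given such a $g$, I would define $\bar g$ to be the map of $[0,1]$ (equivalently of $\mathbb{R}/\mathbb{Z}$) which translates the open interval $(a,b)$ by the corresponding $h$ for each piece of the partition. Since the images of the pieces again form a clopen partition of $R_H$, their projections tile $[0,1]$ up to the finite breakpoint set, so $\bar g$ is a well-defined interval exchange transformation away from that finite set, and its breakpoints lie in $W$ by construction. The assignment $g\mapsto\bar g$ is multiplicative, because composing the defining translations piece by piece downstairs matches composing them upstairs, off the finite union of breakpoints. This produces a homomorphism $\Phi$ from $IET_H$ into the group of interval exchanges with breakpoints in $H$.

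For the inverse I would lift: given an interval exchange $f$ splitting $[0,1]$ into intervals with endpoints in $W$ and translating the $k$-th one by $h_k\in H$, I replace each piece by the clopen set $(a_{+0}, b_{-0})\subset R_H$ and let $\tilde f$ act by $h_k$ there. The key point to verify is that $\tilde f$ is a genuine homeomorphism of $R_H$ lying in $IET_H$: the doubled endpoints are exactly what is needed, since the left limit of $f$ at a breakpoint $q$ dictates $\tilde f(q_{-0})$ and the right limit dictates $\tilde f(q_{+0})$, so continuity holds on the closures of the pieces. Checking $\Phi(\tilde f)=f$ and $\widetilde{\bar g}=g$ then finishes the argument.

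The step I expect to be the main obstacle is establishing that $\Phi$ is a genuine bijection rather than merely a surjection with kernel, that is, that $\Phi$ is injective. The subtlety is that an interval exchange is genuinely ambiguous at its finitely many breakpoints, so one must argue that the doubling construction records precisely the left- and right-limit data and nothing more: if $g\in IET_H$ projects to the identity interval exchange, then $g$ acts trivially off the doubled set, and, because $g$ is a piecewise translation respecting the $\pm0$ decorations, triviality on each open clopen interval forces triviality at the adjacent doubled endpoints as well. Making this matching airtight, so that $R_H$ is neither too large (no spurious elements) nor too small (no identifications lost) relative to the breakpoint indeterminacy of interval exchanges, is where the care is needed.
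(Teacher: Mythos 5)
Your proposal is correct and follows essentially the same route as the paper, which proves this lemma entirely in the paragraph preceding it: every $g\in IET_H$ is a translation by an element of $H$ on each piece of a finite clopen partition into intervals $(a_{+0},b_{-0})$, hence projects to an interval exchange with breakpoints in $W$, and conversely every such interval exchange lifts via the doubled endpoints. Your additional care about injectivity and the breakpoint ambiguity (resolved by density of the non-doubled points, on which a projected-to-identity element must act trivially) is a detail the paper leaves implicit but does not change the argument.
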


Theorem~\ref{main} now implies the following.

\begin{theorem}
The derived subgroup of $IET_H$ is simple and finitely generated.
\end{theorem}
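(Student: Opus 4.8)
The plan is to reduce the final theorem to the already-established results by exhibiting $IET_H$ as a topological full group of a minimal, faithful $\Z^d$-action of the type covered by Theorem~\ref{main}, and then invoking the simplicity result of \cite{matui2} for the derived subgroup. The preceding two lemmas in the excerpt have already done the essential geometric work: they identify $IET_H$ with the topological full group of the action of $H\cong\Z^d$ on the Cantor set $R_H$. So the argument is really a matter of checking that this action satisfies the hypotheses of Theorem~\ref{main}, and then assembling the pieces.

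First I would verify minimality and faithfulness of the action of $H$ on $R_H$. Minimality is inherited from the action of $H$ on $\mathbb{R}/\mathbb{Z}$: the latter is minimal by Kronecker's theorem (already noted in the excerpt), and the quotient map $R_H\to\mathbb{R}/\mathbb{Z}$ is a finite-to-one equivariant surjection, so the orbit closure of any point in $R_H$ must project onto all of $\mathbb{R}/\mathbb{Z}$, which together with density of $W$ forces it to be all of $R_H$. Faithfulness is clear since distinct elements of $H$ act as distinct translations. By Lemma~\ref{lem:free}, the action is then automatically free.

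Next I would produce the conjugacy to an action on a closed $\Z^d$-invariant subset of $\alb^{\Z^d}$ for some finite alphabet $\alb$, which is the precise hypothesis of Theorem~\ref{main}. The natural route is to code the action symbolically: choose a finite clopen partition $\mathcal{P}$ of $R_H$ (for instance using intervals with endpoints in $W$) that is generating for the $H$-action, and send each point $x\in R_H$ to the $\Z^d$-labeling $g\mapsto$ (the part of $\mathcal{P}$ containing $g(x)$). Using that $H\cong\Z^d$ acts freely and minimally on a Cantor set with a generating partition, this coding map is a homeomorphism onto a closed shift-invariant subset of $\alb^{\Z^d}$, where $\alb=\mathcal{P}$, and it is equivariant by construction. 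This realizes $(H,R_H)$ as an action of the form required by Theorem~\ref{main}.

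With these verifications in place, Theorem~\ref{main} immediately gives that the commutator subgroup $[[H]]'=IET_H'$ is finitely generated, and \cite{matui2} (or \cite{matui1}, as cited in the second proposition of the previous section) gives that it is simple. The main obstacle I anticipate is the symbolic coding step: one must choose the generating partition carefully so that the coding map is genuinely injective and its image is closed, which requires that the partition separates orbits and that expansiveness holds. For a free minimal $\Z^d$-action on the Cantor set a generating partition always exists, so this is routine in principle, but it is the one place where the identification is not purely formal. Everything else is a direct citation of the results already obtained.
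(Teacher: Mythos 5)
Your architecture is exactly the paper's: the two lemmas preceding the theorem identify $IET_H$ with the topological full group of the action of $H\cong\Z^d$ on the Cantor set $R_H$, minimality descends from Kronecker's theorem, faithfulness is clear since distinct elements of $H$ act by distinct translations, and then finite generation is Theorem~\ref{main} while simplicity is Matui's theorem. The paper states the conclusion with no further argument, so your extra care about verifying the symbolic-coding hypothesis of Theorem~\ref{main} is welcome rather than redundant.

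However, the one general principle you invoke to dispatch that step --- that a free minimal $\Z^d$-action on the Cantor set always admits a generating clopen partition --- is false: odometers are free minimal $\Z$-actions on the Cantor set that are equicontinuous, hence not expansive, hence admit no generating partition. So the coding cannot be ``routine in principle''; it must exploit the specific structure of $R_H$. Fortunately it does. Fix $a\in W$ with $0<a\le 1/2$ and take the two-element clopen partition of $R_H$ into $[0_{+0},a_{-0}]$ and $[a_{+0},1_{-0}]$. If $x\ne y$ have distinct images $\hat x\ne\hat y$ in $\mathbb{R}/\mathbb{Z}$, an elementary computation with the two cut points $\{0,a\}$ shows that the set of rotations placing $\hat x$ and $\hat y$ in different arcs is a non-empty open subset of the circle, so density of $H$ produces $h\in H$ with $h(x)$ and $h(y)$ in different parts. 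If instead $x=q_{-0}$ and $y=q_{+0}$ for some $q\in W$, then translating by $-q\in H$ sends them to $1=0_{-0}$ and $0=0_{+0}$, which lie in different parts. Hence the partition is generating, the coding map is an equivariant homeomorphism onto a closed shift-invariant subset of $\alb^{\Z^d}$ with a two-letter alphabet, and Theorem~\ref{main} applies. With this repair your proof is complete and coincides with the paper's intended argument.
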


A two-dimensional version of an interval exchange transformation group is considered in the next section.

\section{Penrose tiling group}

There are several versions of the Penrose
tiling~\cite{penrose}, let us describe one of them. The tiles
are two types of rhombi of equal side length 1. The angles of one
rhombus are $72^\circ$ and $108^\circ$. The angles of the other are
$36^\circ$ and $144^\circ$. We call these rhombuses ``thick'' and
``thin'', respectively. Mark a vertex of angle $72^\circ$ in the thick
rhombus, and a vertex of angle $144^\circ$ of the thin rhombus. Mark
the sides adjacent to the marked vertex by single arrows pointing towards
the marked vertex. Mark the other edges by double arrows, so that in the
thick rhombus they point away from the unmarked vertex of angle
$72^\circ$ and in the thin rhombus they point towards the unmarked
vertex of angle $144^\circ$, see Figure~\ref{fig:penrosetiling}.
A \emph{Penrose tiling} is a tiling of
the whole plane by
such rhombi, where markings of the edges match (adjacent tiles must
have same number of arrows pointing in the same direction). See
Figure~\ref{fig:tiling} for an example of a patch of a Penrose tiling.

\begin{figure}
\centering
\includegraphics{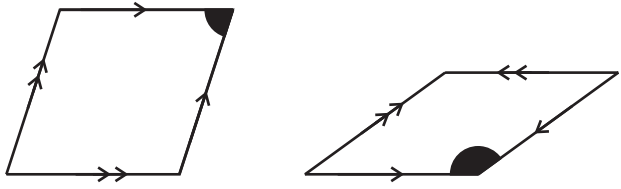}
\caption{Tiles of the Penrose tilings}
\label{fig:penrosetiling}
\end{figure}

\begin{figure}
\centering
\includegraphics{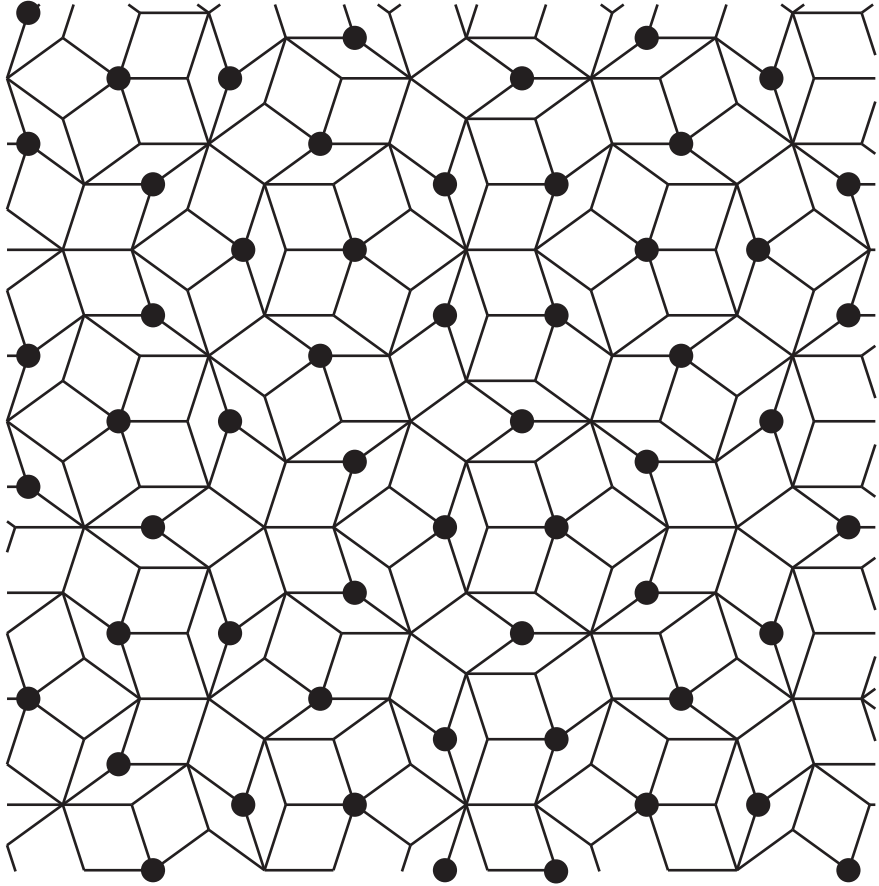}
\caption{Penrose tiling}
\label{fig:tiling}
\end{figure}

There are uncountably many different (up to translation and rotation)
Penrose tilings. Each of them is \emph{aperiodic}, i.e., does not
admit a translational symmetry.

Let us identify $\mathbb{R}^2$ with $\mathbb{C}$, and consider all
Penrose tilings by rhombi such that their sides are parallel to the
lines $e^{k\pi i/5}\mathbb{R}$, $k\in\mathbb{Z}$. A \emph{pointed}
Penrose tiling is a Penrose tiling with a marked vertex of a tile. Let
$\mathcal{T}$ be the set of all such pointed Penrose tilings, up to
translations (two pointed tilings correspond to the same element of
$\mathcal{T}$ if and only if there exists a translation mapping one
tiling to the other and the marked vertex of one tiling to the marked
vertex of the other). We sometimes identify a tiling with the set of vertices of
its tiles.

Let us introduce a topology on $\mathcal{T}$ in the following way. Let
$A\subset T$ be a finite  set of vertices of a Penrose tiling (a \emph{patch}), and let
$v\in A$. The corresponding open set
$\mathcal{U}_{A, v}$ is the set of all pointed tilings $(T, u)$ such that
$A+u-v\subset T$. In other words, a pointed
tiling $(T, u)$ belongs to $\mathcal{U}_{A, v}$ if we can see the pointed
patch $(A, v)$ around $u$ as a part of $T$. Then the
natural topology on $\mathcal{T}$ is given by the basis of open sets
of the form $\mathcal{U}_{A, v}$ for all finite pointed patches $(A,
v)$ of Penrose tilings. It follows from the properties of Penrose
tilings that the space $\mathcal{T}$ is homeomorphic to the Cantor
set, and that for every Penrose tiling $T$ the set of pointed tilings
$(T, v)$ is dense in $\mathcal{T}$. In the literature, see..., the
space $\mathcal{T}$ is called sometimes \emph{transversal}.

Consider  a patch $A$ with two marked vertices $v_1, v_2\in A$. Then we have a
natural homeomorphism
$F_{A, v_1, v_2}:\mathcal{U}_{A, v_1}\arr\mathcal{U}_{A, v_2}$
mapping $(T, u)\in\mathcal{U}_{A, v_1}$ to $(T,
u+v_2-v_1)\in\mathcal{U}_{A, v_2}$. The homeomorphism $F_{A, v_1,
  v_2}$ moves in every patch $A$ the marking from the vertex $v_1$ to
the vertex $v_2$. It is easy to see that $F_{A, v_1, v_2}$ is a
homeomorphism between clopen subsets of $\mathcal{T}$.

\begin{definition}
The \emph{topological full group of Penrose tilings} is the group
$\mathcal{P}$ of
homeomorphisms of $\mathcal{T}$ that are locally equal to the
homeomorphisms of the form $F_{A, v_1, v_2}$.
\end{definition}

The set of all pointed tilings $(T, v)$ obtained from a given
tiling $T$ is dense in $\mathcal{T}$ and invariant under the action of
the topological full group. It follows that every element of the full
group is uniquely determined by the permutation it induces on the set
of vertices of the tiling. In terms of permutations of $T$ the full
group can be defined in the following way.

We say that a map
$\alpha:T\arr T$ \emph{is defined by local rules} if there exists $R$
such that for every $x\in T$ the value of $x-\alpha(x)$ depends only
on the set $B_R\cap (T-x)$, where $B_R$ is the disc of radius $R$
around the origin $(0, 0)\in\mathbb{R}^2$.

The following is straightforward.

\begin{proposition}
A permutation $\alpha:T\arr T$ is induced by the element of the full
group if and only if $\alpha$ is defined by a local
rule. Consequently, the full group is isomorphic to the group of all
permutations of $T$ defined by local rules.
\end{proposition}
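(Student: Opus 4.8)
The plan is to establish the two implications separately and then read off the isomorphism statement. Write $\Phi\colon\mathcal{P}\arr\mathrm{Sym}(T)$ for the map sending a full group element $g$ to the permutation $\alpha_g$ it induces on the vertices of $T$; the excerpt already records that $\Phi$ is a well-defined injective homomorphism, since an element of $\mathcal{P}$ is determined by its action on the dense orbit $\{(T,v)\}$. Thus it suffices to show that the image of $\Phi$ is exactly the set of permutations of $T$ defined by a local rule. Two standard properties of Penrose tilings will be used throughout: the vertex set of any tiling is uniformly discrete, so each disc $B_R$ meets $T$ in finitely many points; and the tilings have finite local complexity, i.e. for each $R$ only finitely many patterns occur as $B_R\cap(T-x)$.

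First I would treat the implication that $g\in\mathcal{P}$ forces $\alpha_g$ to be defined by a local rule. By definition $g$ is locally of the form $F_{A,v_1,v_2}$, so every point of $\mathcal{T}$ has a clopen neighbourhood on which $g$ coincides with some $F_{A,v_1,v_2}$. Since $\mathcal{T}$ is compact and totally disconnected, finitely many such neighbourhoods can be refined to a finite clopen partition $\mathcal{T}=\mathcal{V}_1\sqcup\cdots\sqcup\mathcal{V}_m$ with $g|_{\mathcal{V}_k}=F_{A_k,v_1^{(k)},v_2^{(k)}}|_{\mathcal{V}_k}$. Each $\mathcal{V}_k$ is a finite Boolean combination of basic clopen sets $\mathcal{U}_{A,v}$, so there is a single radius $R$ large enough that membership $(T,x)\in\mathcal{V}_k$ is determined by the finite pattern $B_R\cap(T-x)$ and that each $A_k-v_1^{(k)}\subset B_R$. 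On $\mathcal{V}_k$ one has $\alpha_g(x)-x=v_2^{(k)}-v_1^{(k)}$, a constant; hence $x-\alpha_g(x)$ depends only on $B_R\cap(T-x)$, which is exactly the assertion that $\alpha_g$ obeys a local rule.

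The reverse implication is the substantial one. Suppose $\alpha$ is a permutation of $T$ defined by a local rule with radius $R$. By finite local complexity there are only finitely many patterns $P$ of the form $B_R\cap(T-x)$, so the displacement $\delta(P)=\alpha(x)-x$ takes only finitely many values and is bounded, say $|\delta(P)|\le D$. Enlarging the radius to $R'=\max(R,D)$ only refines the patterns and so preserves the local rule, and now for each $x$ the target satisfies $\alpha(x)-x\in B_{R'}\cap(T-x)$, i.e. the image vertex is already visible inside the governing pattern. Enumerate the finitely many $R'$-patterns $Q_1,\dots,Q_n$ occurring in the hull; they define a clopen partition of $\mathcal{T}$ into the sets $\mathcal{U}_{Q_j}=\{(T',u):B_{R'}\cap(T'-u)=Q_j\}$. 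By density of the $T$-orbit each $Q_j$ occurs in $T$, so $\delta(Q_j)$ is defined, and I would set $g$ on $\mathcal{U}_{Q_j}$ equal to $F_{Q_j,\,0,\,\delta_j}$, where $0$ is the marked vertex and $\delta_j=\delta(Q_j)\in Q_j$. By construction $g$ is locally of the required form and $g(T,x)=(T,\alpha(x))$ on the dense orbit, so $\Phi(g)=\alpha$, provided $g$ is a homeomorphism.

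That last point is where the main obstacle lies: the piecewise definition gives a continuous, locally invertible map, but bijectivity of $g$ on all of $\mathcal{T}$ must be argued rather than assumed. My plan is to show that the class of permutations defined by local rules is closed under inversion. Since displacements are bounded by $D$, the preimage $\alpha^{-1}(y)$ lies among the finitely many vertices in $B_D\cap(T-y)$, and deciding which one maps to $y$ requires only the pattern $B_{R+D}\cap(T-y)$; hence $\alpha^{-1}$ also obeys a local rule. Applying the construction of the previous paragraph to $\alpha^{-1}$ produces $g'\in\mathcal{P}$ with $\Phi(g')=\alpha^{-1}$. Then $gg'$ and $g'g$ both induce the identity permutation on the dense orbit $\{(T,v)\}$, so by continuity they equal the identity on $\mathcal{T}$; thus $g$ is a homeomorphism with inverse $g'$ and lies in $\mathcal{P}$. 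Combining the two implications with the injectivity of $\Phi$ identifies $\mathcal{P}$ with the group of all permutations of $T$ defined by local rules, yielding both the equivalence and the \emph{consequently} clause. The only steps needing genuine care are the uniform-radius bookkeeping in the first implication and the boundedness-of-displacement argument underpinning both the visibility of $\alpha(x)$ and the closure under inverse.
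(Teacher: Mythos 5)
The paper gives no proof of this proposition, simply declaring it ``straightforward,'' and your argument is a correct and complete version of the intended reasoning: compactness plus finite local complexity to extract a uniform radius in the forward direction, and bounded displacement plus the observation that local rules are closed under inversion (so that $gg'$ and $g'g$ are the identity on a dense orbit, hence everywhere) in the reverse direction. No gaps; you have simply supplied the details the authors chose to omit.
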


Let us describe a more explicit model of the space $\mathcal{T}$ and
the full group $\mathcal{P}$ using a description of the Penrose tilings given in
the papers~\cite{bruijn:pen1,bruijn:pen2}.

Denote $\zeta=e^{\frac{2\pi i}5}$, and let
\[P=\left\{\sum_{j=0}^4 n_j\zeta^j\;:\;n_j\in\mathbb{Z}, \sum_{j=0}^4
  n_j=0\right\}=
(1-\zeta)\mathbb{Z}[\zeta]\]
be the group generated by the vectors on the sides of the regular
pentagon $S=\{1, \zeta, \zeta^2, \zeta^3, \zeta^4\}$. Note that
$5=4-\zeta-\zeta^2-\zeta^3-\zeta^4\in P$.
As an abelian group, $P$ is isomorphic to $\mathbb{Z}^4$.

Denote by $\mathcal{L}$ the set of lines of the form
$i\zeta^j\mathbb{R}+w$, for $j=0, 1, \ldots, 4$ and $w\in P$. It is
easy to see that for any two intersecting lines $l_1,
l_2\in\mathcal{L}$ and any generator $z\in\{1-\zeta, \zeta-\zeta^2,
\zeta^2-\zeta^3, \zeta^3-\zeta^4\}$ there exists $z'\in P$ such that
$z'$ is parallel to $l_2$ and $l_1+z=l_1+z'$. It follows that for any
pair of intersecting lines $l_1, l_2\in\mathcal{L}$ the intersection
point $l_1\cap l_2$ belongs to $P$. Consequently, a point $\xi\in\C$
belongs either to 0, 1, or to 5 lines from $\mathcal{L}$. If $\xi\in\C$
does not belong to any line $l\in\mathcal{L}$, then we call $\xi$
\emph{regular}.

Similarly to the case of interval exchange transformations, let us double each line $l\in\mathcal{L}$.
Let $\mathcal{C}$ be the obtained space and let
$Q:\mathcal{C}\arr\mathbb{C}$ be the corresponding quotient map. If
$\xi\in\mathbb{C}$ is regular, then $Q^{-1}(\xi)$ consists of a single
point. If $\xi\in\mathcal{C}\setminus P$ belongs to a line
$l\in\mathcal{L}$,
then $Q^{-1}(\xi)$ consists of two points associated with each of the
two half-planes into which $l$ separates $\mathbb{C}$. Every point
$\xi\in P$ has 10 preimages in $\mathcal{C}$ associated with each of
the ten sectors into which the lines from $\mathcal{L}$ passing
through $\xi$ separate the plane. A sequence $\xi_n$ of points of $\mathcal{C}$
converges to a point $\xi\in\mathcal{C}$ if and only if the sequence $Q(\xi_n)$
converges to $Q(\xi)$ and the sequence $\xi_n$ eventually belongs (if $Q(\xi)$ is not
regular) to the associated closed half-plane or sector.
The space $\mathcal{C}$ is locally compact and totally
disconnected. Polygons with sides belonging to lines
from $\mathcal{L}$ form a basis of topology of $\mathcal{C}$.

The group $P$ acts on $\mathcal{C}$ in the natural way, so that the
action is projected by $Q$ to the action of $P$ on $\mathbb{C}$ by
translations. Therefore, sums of the form $\tilde\xi+a$, for
$\tilde\xi\in\mathcal{C}$ and $a\in P$, are well defined.

Let us describe, following~\cite{bruijn:pen1,bruijn:pen2}, how a Penrose tiling is
associated with a point $\tilde\xi\in\mathcal{C}$. We will usually
denote $\xi=Q(\tilde\xi)$.
Suppose that $\xi$ is regular.
The vertices of the corresponding tiling $T_{\tilde\xi}$ will be the points of the
form $\sum_{j=0}^4k_j\zeta^j$, where $k_j\in\mathbb{Z}$ are such that
\[\left(\sum_{j=0}^4 k_j,\quad\sum_{j=0}^4 k_j\zeta^{2j}+\xi\right)\in
\bigcup_{s=1}^4(s, V_s),\] where  $V_1$
is the pentagon with vertices $\zeta^j$, $V_2$ is the pentagon with
vertices $\zeta^j+\zeta^{j+1}$, $V_3=-V_2$, and $V_4=-V_1$. (Note that
we have changed $\xi$ to $-\xi$ comparing with~\cite{bruijn:pen1,bruijn:pen2}.)

If $\xi$ is singular, then we
can find a sequence $\tilde\xi_n$ of regular points converging in $\mathcal{C}$ to
$\tilde\xi$, and then the tiling $T_{\tilde\xi}$ is the limit of the
tilings $T_{\tilde\xi_n}$.

 Let $v=\sum_{j=0}^4n_j\zeta^{2j}\in P$
and $v'=\sum_{j=0}^4n_j\zeta^j$. Then $x\in T_{\tilde\xi}$ if and only if
$x-v'\in T_{\tilde\xi+v}$. It follows that action of $P$ on
$\mathcal{C}$ preserves the associated tilings up to translations. In
fact, it is not hard to show that
two tilings $T_{\tilde\xi_1}$ and $T_{\tilde\xi_2}$ are
translations of each other if and only if $\tilde\xi_1$ and
$\tilde\xi_2$ belong to one $P$-orbit, see~\cite{bruijn:pen1,bruijn:pen2}....

Note that sides of the pentagons $V_s'=V_s-s$ are contained in lines from the
collection $\mathcal{L}$, hence they are naturally identified with
compact open subsets of $\mathcal{C}$. See Figure~\ref{fig:vs} for the
pentagons $V_s'$. Denote by $V'=\bigcup_{s=1}^4 (s, V_s')$.

\begin{figure}
\centering
\includegraphics{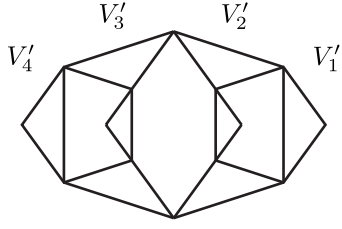}
\caption{Pentagons $V_s'$}
\label{fig:vs}
\end{figure}

For every $(s, \tilde\xi)\in V'$ the point
$s=s+0\cdot\zeta+0\cdot\zeta^2+0\cdot\zeta^3+0\cdot\zeta^4$ belongs to
the tiling $T_{\tilde\xi}$. We say that the pointed tiling
$(T_{\tilde\xi}, s)$ \emph{corresponds} to the point $(s,
\tilde\xi)\in V'$.

Let $x=\sum_{j=0}^4k_j\zeta^j\in T_{\tilde\xi}$, and let
$s=\sum_{j=0}^4k_j$. Then the numbers $v'=x-s$ and
$v=\sum_{j=0}^4k_j\zeta^{2j}-s$ belong to $P$, and the map $y\mapsto
y-v'$ is a bijection $T_{\tilde\xi}\arr T_{\tilde\xi+v}$. This maps
moves $x$ to the marked vertex $s=x-(x-s)$ of the tiling corresponding to $(s,
\tilde\xi+v)$. It follows that every pointed tiling, up to translation,
corresponds to a point of $V'$. It is easy to see that every
pointed Penrose tiling is represented by a unique point of $V'$, so
that we get a bijection between $V'$ and the space $\mathcal{T}$. It follows from the results
of~\cite{bruijn:pen1,bruijn:pen2} that this bijection is a homeomorphism.

\begin{proposition}
\label{pr:fullVprime}
The group $\mathcal{P}$ acts on $V'\cong\mathcal{T}$ locally by
translations by elements of $P$. In other words, for every
$\alpha\in\mathcal{P}$ there exists a partition of $V'$ into disjoint
clopen subsets $(s_i, U_i)$ such that $\alpha$ acts on each of them by
a translation $\alpha(s_i, x)=(s_i', x+\xi_i)$ for some $s_i'\in\{1,
2, 3, 4\}$ and $\xi_i\in P$.
\end{proposition}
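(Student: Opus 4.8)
The plan is to reduce the statement to the generating homeomorphisms $F_{A, v_1, v_2}$ and then compute the action of each one on $V'$ explicitly through the identification $V'\cong\mathcal{T}$ established above. By definition every $\alpha\in\mathcal{P}$ is locally equal to maps of the form $F_{A, v_1, v_2}$, so there is a clopen partition of $V'$ on each piece of which $\alpha$ coincides with one such map. Refining this partition by the clopen decomposition $V'=\bigcup_{s=1}^4(s, V_s')$, I may assume in addition that the type coordinate is a constant $s\in\{1,2,3,4\}$ on each piece. It therefore suffices to show that a single $F_{A, v_1, v_2}$, restricted to a clopen set on which the type is a fixed $s$, acts by a translation of the $\mathcal{C}$-coordinate by an element of $P$ (together with a relabeling of the type).

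First I would unwind the action through the bijection with $\mathcal{T}$. A point $(s, \tilde\xi)$ of such a piece corresponds to the pointed tiling $(T_{\tilde\xi}, s)$, with the marking at the integer vertex $s$. Since the piece lies in $\mathcal{U}_{A, v_1}$, the patch $A$ appears around the marked vertex with $v_1$ placed at $s$, and by definition $F_{A, v_1, v_2}$ moves the marking to the vertex $x=s+v_2-v_1$ of $T_{\tilde\xi}$, producing the pointed tiling $(T_{\tilde\xi}, x)$. The decisive observation is that $x$ is a \emph{fixed} complex number on the whole piece: $s$ is the constant type and $v_1, v_2$ are fixed vertices of the fixed patch $A$, none of which depends on $\tilde\xi$.

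Next I would apply the representation formula derived above. Writing $x=\sum_{j=0}^4 k_j\zeta^j$ with $t=\sum_{j=0}^4 k_j\in\{1,2,3,4\}$ and setting $v=\sum_{j=0}^4 k_j\zeta^{2j}-t\in P$, the pointed tiling $(T_{\tilde\xi}, x)$ corresponds to the point $(t, \tilde\xi+v)\in V'$. Hence on this piece $F_{A, v_1, v_2}$ sends $(s, \tilde\xi)\mapsto(t, \tilde\xi+v)$, which is exactly a translation of the $\mathcal{C}$-coordinate by $v\in P$ together with the relabeling $s\mapsto t$ of the type. Because $x$ is constant on the piece, the integers $k_j$, and thus $t$ and $v$, are constants as well; this yields precisely the desired form with $s_i=s$, $s_i'=t$, and $\xi_i=v$.

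The main point requiring care is the well-definedness of the tuple $(k_j)$, i.e., that $t$ and $v$ depend only on the complex number $x$ and not on any auxiliary choice. This rests on the fact that the kernel of the map $\mathbb{Z}^5\to\mathbb{C}$, $(k_j)\mapsto\sum_j k_j\zeta^j$, is generated by $(1,1,1,1,1)$, since $1+\zeta+\zeta^2+\zeta^3+\zeta^4=0$ is the minimal relation for $\zeta$; consequently $\sum_j k_j$ is determined modulo $5$, and the constraint $t\in\{1,2,3,4\}$ pins down the representation uniquely. Once this uniqueness is in place, the constancy of $x$ on each piece immediately gives the constancy of $t$ and $v$, and assembling the finitely many pieces $U_i$ completes the proof.
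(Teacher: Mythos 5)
Your proof is correct and follows exactly the route the paper intends: the paper states this proposition without an explicit proof, relying on the immediately preceding paragraph that identifies the pointed tiling $(T_{\tilde\xi},x)$, $x=\sum_j k_j\zeta^j$, with the point $(t,\tilde\xi+v)\in V'$ for $t=\sum_j k_j$ and $v=\sum_j k_j\zeta^{2j}-t\in P$, and your argument is precisely the reduction to the generators $F_{A,v_1,v_2}$ combined with that formula. Your added observation that the representation $(k_j)$ is pinned down by the kernel of $\mathbb{Z}^5\to\mathbb{C}$ being generated by $(1,1,1,1,1)$ together with the constraint $t\in\{1,2,3,4\}$ is a worthwhile explicit justification of the constancy of $t$ and $v$ on each piece, which the paper leaves tacit.
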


Let us find some elements $t_i\in P$ such that $V_s'+t_i$ are pairwise
disjoint, and denote by $V''\subset\mathcal{C}$ the union of the sets
$V_s''=V_s'+t_i$. Then it follows from Proposition~\ref{pr:fullVprime}
that $\mathcal{P}$ is the group of all transformations $V''\arr V''$
that are locally equal to translations by elements of $P$.

Let us say that two clopen sets $U_1, U_2\subset\mathcal{C}$ are
\emph{equidecomposable} if there exists a homeomorphism $\phi:U_1\arr
U_2$ locally equal to translations by elements of $P$. If $U$ is any
clopen subset which is equidecomposable with $V''$, then $\mathcal{P}$
is equal to the group of all transformations of $U$ that are locally
translations by elements of $P$.

\begin{proposition}
The set $V''$ is equidecomposable with the parallelogram $F$ with vertices
$0, w_1=\zeta^2-\zeta^3, w_2=5(1-\zeta^2-\zeta^3+\zeta^4), w_1+w_2$.
\end{proposition}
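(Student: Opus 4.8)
The plan is to prove the statement by exhibiting an explicit $P$-equidecomposition: cut $V''$ into finitely many polygons with sides on the lines of $\mathcal{L}$, translate each piece by a suitable element of $P$, and check that the translated pieces tile $F$ exactly. Since being $P$-equidecomposable is an equivalence relation additive over disjoint unions, such a finite dissection is all that is needed; and because $\mathcal{C}$ is built by doubling the lines of $\mathcal{L}$, every polygon cut along these lines is genuinely clopen, so the reassembly is an honest partition, with each doubled copy of a boundary line assigned to exactly one piece. Before cutting anything I would record the one quantity that every such map must preserve, namely planar area: a map locally equal to a $P$-translation is locally an isometry, so $\mathrm{Area}(F)=\mathrm{Area}(V'')$ is necessary. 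Using $\overline{w_1}=-w_1$ one computes $\mathrm{Area}(F)=|\mathrm{Im}(\overline{w_1}w_2)|=5\bigl(\sin\tfrac{\pi}{5}+3\sin\tfrac{2\pi}{5}\bigr)$, while $\mathrm{Area}(V'')=2\,\mathrm{Area}(V_1')+2\,\mathrm{Area}(V_2')$ (because $V_3'=-V_2'$ and $V_4'=-V_1'$) equals $2(1+\phi^2)$ times the area of the unit pentagon, where $\phi=2\cos\tfrac{\pi}{5}$; the identity $\sin\tfrac{2\pi}{5}=\phi\sin\tfrac{\pi}{5}$ then shows the two expressions agree. This both certifies that there is no measure obstruction and pins down the scale of $F$.

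Two structural features guide the dissection. First, both side vectors of $F$ lie in $P$: clearly $w_1=\zeta^2-\zeta^3\in P$, and $w_2=5u$ with $u=1-\zeta^2-\zeta^3+\zeta^4\in P$ (all coefficient-sums being zero). Thus $F$ is an $\mathcal{L}$-bounded region and a fundamental domain for the rank-two sublattice $\mathbb{Z}w_1+\mathbb{Z}w_2$, and the factor $5$, recalling $5\in P$, is exactly what makes the areas balance. Second, only translations are allowed, no rotations, so the finer invariant controlling the dissection is the Hadwiger edge-direction data: every edge of each $V_s'$ and of $F$ points in one of the five pentagrid directions $i\zeta^j\mathbb{R}$, and the pieces must be chosen so that, direction by direction, the boundary contributions cancel. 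Concretely I would overlay $F$ and the four pentagons on a common refinement by lines of $\mathcal{L}$ and then match each elementary cell of $F$ to a congruent elementary cell inside some $V_s'$ by an explicit translation vector; the golden-ratio identity $\phi^2=\phi+1$ is what lets the two pentagon sizes (the unit pentagons $V_1',V_4'$ and the $\phi$-scaled $V_2',V_3'$) be reconciled with the single elongated region $F$. The bookkeeping is finite and is best displayed in a figure.

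The main obstacle is exactly this explicit matching: writing down the finite list of pieces together with their translation vectors and verifying that the translates of the pieces of $V''$ cover $F$ once and only once. Two checks deserve the most care. The first is that every matching vector genuinely lies in $P$ and not merely in $\mathbb{C}$; here one uses that all vertices involved lie in $\mathbb{Z}[\zeta]$ and that the required differences have coefficient-sum zero, so they fall in $P=(1-\zeta)\mathbb{Z}[\zeta]$ automatically. The second is the boundary combinatorics in $\mathcal{C}$: each cut along a line of $\mathcal{L}$ splits that line into its two doubled copies, and one must ensure each copy is allotted to a single piece, so that the final family is literally a clopen partition of $F$ rather than an overlap up to boundary. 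Once the dissection is written out and these two verifications are done, additivity of $P$-equidecomposition over the four pentagons finishes the proof.
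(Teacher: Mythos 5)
There is a genuine gap: your text is a plan for a proof, not a proof. You correctly set up the framework (finite clopen dissection along lines of $\mathcal{L}$, translations in $P$), and your area computation is a correct and worthwhile sanity check ($\mathrm{Area}(F)=5(\sin\frac{\pi}{5}+3\sin\frac{2\pi}{5})=5(2+\phi)\sin\frac{2\pi}{5}=\mathrm{Area}(V'')$). But you then explicitly defer ``the main obstacle,'' namely the actual list of pieces and translation vectors, and this is where all the content of the proposition lives. Your suggested recipe --- overlay $F$ and the pentagons on a common refinement by lines of $\mathcal{L}$ and match ``congruent elementary cells'' by translations --- has no reason to succeed as stated: there is no a priori reason the cells of a common refinement pair up under translation, and producing such a pairing is exactly the problem to be solved.

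Two ideas from the paper's proof are missing and are not recoverable from your outline. First, the edge-direction (Hadwiger--Glur) invariant you mention is not merely a constraint to keep in mind: a single regular pentagon is \emph{not} translation-equidecomposable with any parallelogram, since each of its five edge directions carries a nonzero invariant. The obstruction vanishes only for the union $V''$ because $V_4'=-V_1'$ and $V_3'=-V_2'$, and the dissection must exploit this central symmetry explicitly. The paper does so by cutting the pentagons into triangles and pairing each triangle with its point-reflected partner to form parallelograms (ten of them, five of each of two shapes). Second, one needs the key lemma that such a parallelogram, with sides in the pentagrid directions, is equidecomposable with its rotation by $\pi/5$ using only cuts along $\mathcal{L}$ and translations in $P$; this is what allows pieces sitting at the five different orientations to be brought to a common orientation and assembled into $F$. (Note also that the classical Hadwiger--Glur sufficiency argument uses cuts in arbitrary directions and arbitrary real translations, so it cannot be invoked off the shelf here; the restricted directions and the restriction of translations to $P$ must be respected, which is why the explicit construction is needed.) Without these two steps the ``finite bookkeeping'' you postpone is not routine, and the proof is incomplete.
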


\begin{proof}
Let us cut the pentagons $V_s''$ into triangles as it is shown on
Figure~\ref{fig:pentagonscut}.

\begin{figure}
\centering
\includegraphics{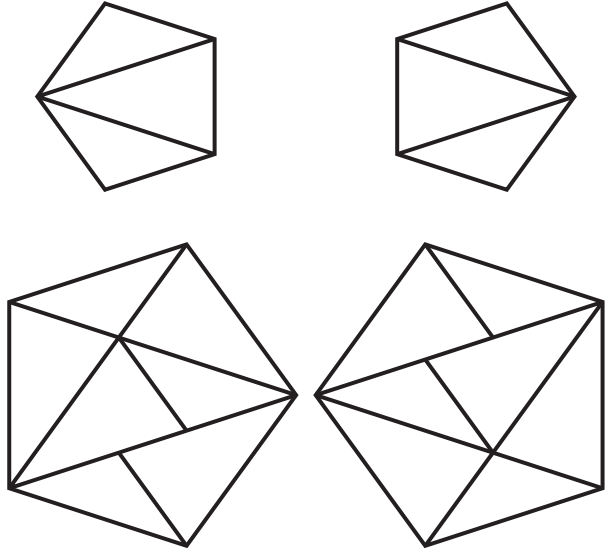}
\caption{Cutting pentagons $V_s$ into triangles}
\label{fig:pentagonscut}
\end{figure}

The obtained triangles can be grouped into pairs of triangles $T, T'$ such
that $T'$ is obtained from $T$ by a rotation by $2\pi$ (and
translation). Such pairs can be put together to form parallelograms,
as it is shown on Figure~\ref{fig:parallelograms}.

\begin{figure}
\centering
\includegraphics{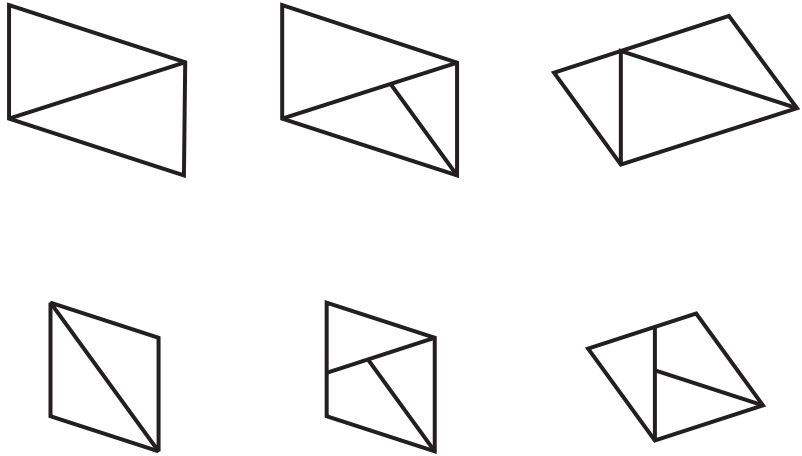}
\caption{Equidecomposability of parallelograms}
\label{fig:parallelograms}
\end{figure}

Figure~\ref{fig:parallelograms} also shows that each such
parallelogram is equidecomposable with its rotation by $\pi/5$. It
follows that each parallelogram is equidecomposable with its rotation
by any angle of the form $k\pi/5$.  Consequently, every parallelogram
formed by the acute-angled triangles is equidecomposable with the
parallelogram with the set of vertices $\{0, \zeta^2-\zeta^3,
1-\zeta^2, 1-\zeta^3\}$,
and each parallelogram formed by the
obtuse-angled triangles is equidecomposable with the parallelogram
with the set of vertices $\{0, \zeta^2-\zeta^3, \zeta^4-\zeta^3,
\zeta^2-2\zeta^3+\zeta^4\}$. We get 5 parallelograms of each kind.
We can put all the obtained parallelograms
together to form the parallelogram $F$.
\end{proof}

The parallelogram $F$,
seen as a subset of $\mathcal{C}$, is
the fundamental domain of the group $\langle w_1, w_2\rangle<P$. It is
easy to check that $P/\langle w_1, w_2\rangle$ is isomorphic to
$\mathbb{Z}^2\oplus\mathbb{Z}/5\mathbb{Z}$. The space of orbits
$\mathcal{C}/\langle w_1, w_2\rangle$ is naturally homeomorphic to the
parallelogram $F$ (and hence to the spaces $V'$ and $\mathcal{T}$).

\begin{proposition}
The group $\mathcal{P}$ is isomorphic to the full topological group of
the action of $P/\langle w_1, w_2\rangle$ on the Cantor set
$\mathcal{C}/\langle w_1, w_2\rangle$.
\end{proposition}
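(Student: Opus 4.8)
The plan is to unwind both descriptions through the identification of $\bar{\mathcal{C}}:=\mathcal{C}/\langle w_1,w_2\rangle$ with the fundamental domain $F$, and to verify that under this identification the two groups consist of exactly the same homeomorphisms. Write $\bar{P}=P/\langle w_1,w_2\rangle$ and let $q:\mathcal{C}\arr\bar{\mathcal{C}}$ be the quotient map. Since $P$ is abelian, $\langle w_1,w_2\rangle$ is a normal subgroup, so $\bar{P}$ acts on $\bar{\mathcal{C}}$ by $\bar{a}\cdot q(\tilde\xi)=q(\tilde\xi+a)$; this action is well defined and by homeomorphisms, and its topological full group $[[\bar{P}]]$ is what I must match with $\mathcal{P}$. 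The isomorphism I claim is simply conjugation by $q|_F$, namely $\alpha\mapsto (q|_F)\circ\alpha\circ(q|_F)^{-1}$, so everything reduces to showing this conjugation carries $\mathcal{P}$ onto $[[\bar{P}]]$.

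First I would record the two facts already available. Because $F$ is a fundamental domain for $\langle w_1,w_2\rangle$, the restriction $q|_F:F\arr\bar{\mathcal{C}}$ is a homeomorphism (this is the statement that $\mathcal{C}/\langle w_1,w_2\rangle$ is homeomorphic to $F$, which in particular makes $\bar{\mathcal{C}}$ a Cantor set). By the previous propositions $F$ is equidecomposable with $V''$, so $\mathcal{P}$ is realized as the group of all homeomorphisms of $F$ that are locally equal to translations by elements of $P$. The forward inclusion is then immediate: if $\alpha\in\mathcal{P}$ equals, on a clopen piece $U_i\subset F$, the translation $x\mapsto x+a_i$ with $a_i\in P$ and $U_i+a_i\subset F$, then on $q(U_i)$ the conjugate sends $q(x)\mapsto q(x+a_i)=\bar{a_i}\cdot q(x)$, which is exactly the local action of $\bar{a_i}\in\bar{P}$; hence it lies in $[[\bar{P}]]$.

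The content is therefore in the reverse direction, and this is the step I expect to be the main obstacle. Given $\beta\in[[\bar{P}]]$ acting on a clopen piece $\bar{U_i}\subset\bar{\mathcal{C}}$ as the element $\bar{a_i}\in\bar{P}$, I transport $\beta$ to a homeomorphism $\alpha=(q|_F)^{-1}\circ\beta\circ(q|_F)$ of $F$ and must show $\alpha$ is locally an honest translation by $P$. Choosing any lift $a_i\in P$ of $\bar{a_i}$, on $U_i=(q|_F)^{-1}(\bar{U_i})$ one has $\alpha(x)=x+a_i-w(x)$, where $w(x)\in\langle w_1,w_2\rangle$ is the unique element folding $x+a_i$ back into $F$. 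The difficulty is that a priori $w(x)$ could vary with $x$, in which case $\alpha$ would not be a single translation on $U_i$.

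To control the fold-back I would use that $F$ is clopen in $\mathcal{C}$: it is a polygon with sides on lines of $\mathcal{L}$, and such polygons are clopen in $\mathcal{C}$ exactly as the intervals $(a_{+0},b_{-0})$ are clopen in the interval-exchange model. Consequently the translates $\{F+w:w\in\langle w_1,w_2\rangle\}$ form a clopen partition of $\mathcal{C}$. For the fixed lift $a_i$, intersecting $U_i$ with the clopen sets $(F+w)-a_i$ partitions $U_i$ into finitely many (by compactness of $U_i$) clopen pieces, on each of which $w(x)$ equals a constant $w$, so that $\alpha(x)=x+(a_i-w)$ is a genuine translation by the element $a_i-w\in P$ carrying the piece into $F$. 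Refining over all $i$ shows $\alpha\in\mathcal{P}$. Since conjugation by $q|_F$ is manifestly a group homomorphism, and the two inclusions just established make it a bijection between $\mathcal{P}$ and $[[\bar{P}]]$, this yields the claimed isomorphism.
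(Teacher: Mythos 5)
Your argument is correct and is essentially the argument the paper intends: the paper states this proposition without proof, leaving it to follow from the identification of $\mathcal{C}/\langle w_1, w_2\rangle$ with the clopen fundamental domain $F$ and the realization of $\mathcal{P}$ as the group of transformations of $F$ that are locally translations by elements of $P$. Your only genuine addition is the careful verification in the reverse inclusion that the fold-back element of $\langle w_1, w_2\rangle$ is locally constant, which you correctly reduce to the translates of $F$ forming a clopen partition of $\mathcal{C}$; this fills in exactly the detail the paper omits.
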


\begin{corollary}
The derived subgroup of $\mathcal{P}$ is simple and finitely generated.
\end{corollary}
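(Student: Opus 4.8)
The plan is to identify $\mathcal{P}$ with a topological full group to which the machinery of Section~2 applies, the one novelty being that the acting group now has torsion. By the preceding Proposition, $\mathcal{P}\cong[[G]]$, the topological full group of the action of $G=P/\langle w_1,w_2\rangle\cong\Z^2\oplus\Z/5\Z$ on the Cantor set $X=\mathcal{C}/\langle w_1,w_2\rangle$. First I would record that this action is faithful and minimal: minimality descends from the minimality of the $P$--action on $\mathcal{C}$ (itself a consequence of the repetitivity of Penrose tilings), and faithfulness is immediate from the definition of $G$. Lemma~\ref{lem:free} then shows the action is \emph{free}, the order--five generator acting as a fixed--point--free homeomorphism. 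The polygonal clopen sets with sides on the lines of $\mathcal{L}$, equivalently the pointed patches of tilings, play the role of the cylindrical sets $\mathcal{W}_\pi$: the action is expansive (bounded patches separate tilings), hence is coded by a $G$--subshift, so that the hypotheses of Theorem~\ref{main} are met with $\Z^d$ replaced by the finitely generated abelian group $G$.

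For simplicity I would appeal to Matui \cite{matui1,matui2}. The transformation groupoid $G\ltimes X$ of a free minimal action on a Cantor set is minimal and principal, so its commutator full group is simple and contained in every nontrivial normal subgroup; hence $[[G]]'$ is simple. It remains only to prove that $[[G]]'$ is finitely generated.

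For finite generation, the constructions of Section~2 transfer with $\Z^d$ replaced by $G$ once one fixes the symmetric generating set $S=\{\pm e_1,\pm e_2,\pm e_3\}$, where $e_1,e_2$ generate the free part and $e_3$ the factor $\Z/5\Z$. Lemma~\ref{lem:incompatible} uses only freeness and compactness, so it yields the radius $R_1$ exactly as before; the identity $B(R+1)=\bigcup_{h\in S}(B(R)+h)$ holds in the word metric of any Cayley graph; and both Corollary~\ref{cor:patchesunion} and the permutation lemma preceding it are statements about commutators of $3$--cycles, independent of the acting group. Running the two Propositions of Section~2 verbatim then shows that $[[G]]'$ is generated by the finite set $\mathcal{T}_{R_1+2}$ of elements $T_{\pi,(0,e_i,-e_i)}$, \emph{provided} one can still express an arbitrary $T_{\pi,(g_1,g_2,g_3)}$ through the axis generators.

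This last reduction is where the torsion intervenes, and is the step I expect to be the main obstacle. Along the $\Z/5\Z$--direction the relevant translates are indexed by the five residues $\{0,e_3,2e_3,3e_3,4e_3\}$, and the generators $T_{\pi+ke_3,(0,e_3,-e_3)}$ realize the consecutive $3$--cycles $(k-1,k,k+1)\bmod 5$; one checks directly that these generate the alternating group $A_5$ on the five positions. Granting this, the combinatorial Lemma~\ref{l1} must be replaced by its analogue for the index set $\Z^2\oplus\Z/5\Z$, in which one of the three--letter factors is replaced by a cyclic factor of size five, the induction on the number of factors being identical and the base case in the torsion direction being supplied by the generation of $A_5$ above. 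This yields all $T_{\pi,(g_1,g_2,g_3)}$ with arbitrary $g_i\in G$, completing the proof of finite generation; together with the simplicity established above, this gives the Corollary.
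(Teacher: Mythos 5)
Your proposal is correct and follows exactly the route the paper intends: the preceding Proposition identifies $\mathcal{P}$ with the topological full group of a free minimal action of $P/\langle w_1,w_2\rangle\cong\mathbb{Z}^2\oplus\mathbb{Z}/5\mathbb{Z}$, simplicity comes from Matui, and finite generation comes from rerunning Section~2 for this finitely generated abelian group. The paper in fact states the corollary with no written proof at all, silently assuming the arguments of Section~2 extend past the torsion-free case; your explicit treatment of the $\mathbb{Z}/5\mathbb{Z}$ factor (the consecutive $3$-cycles $(k-1,k,k+1)\bmod 5$ generating $A_5$ on each fiber, feeding into the induction of Lemma~\ref{l1}) supplies precisely the detail the paper omits, and it checks out.
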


\end{document}